\newtheorem{theorem}{Theorem}
\newtheorem{lemma}[theorem]{Lemma}
\theoremstyle{definition}
\newtheorem{remark}[theorem]{Remark}
\newtheorem*{sketch}{Proof sketch (omitting compression of twins)}
\newcommand{\reduce}{\text{red}}
\newcommand{\universal}{universal }
\DeclarePairedDelimiter\set{\{}{\}}
\DeclarePairedDelimiter\abs{|}{|}
\title{Shortened Universal Cycles for Permutations}
\author{Rachel Kirsch\thanks{Department of Mathematical Sciences, George Mason University, Fairfax, VA, E-mail: \texttt{rkirsch4@gmu.edu}. Research of this author was partially supported by NSF grant DMS-1839918.}
	\and
	Bernard Lidick\'{y}\thanks{Department of Mathematics, Iowa State University, Ames, IA, E-mail: \texttt{lidicky@iastate.edu}. Research of this author was partially supported by NSF grants DMS-2152490 and DMS-1855653 and Scott Hanna fellowship.}
	\and 
	Clare Sibley\thanks{Houston, TX, E-mail: \texttt{clare.sibley@rice.edu}. Research of this author was partially supported by NSF grant DMS-1839918.}
	\and
	Elizabeth Sprangel\thanks{Department of Mathematics, Iowa State University, Ames, IA, E-mail:  \texttt{sprangel@iastate.edu}. Research of this author was partially supported by NSF grant DMS-1839918.}}
\date{October 13, 2022}
\begin{document}
	\maketitle
	
	\begin{abstract}
		Kitaev, Potapov, and Vajnovszki [On shortening u-cycles and u-words for permutations, Discrete Appl. Math, 2019] described how to shorten universal words for permutations, to length $n!+n-1-i(n-1)$ for any $i \in [(n-2)!]$, by introducing incomparable elements. They conjectured that it is also possible to use incomparable elements to shorten universal cycles for permutations to length $n!-i(n-1)$ for any $i \in [(n-2)!]$. In this note we prove their conjecture. The proof is constructive, and, on the way, we also show a new method for constructing universal cycles for permutations.
	\end{abstract}
	
	\section{Introduction}
	A \emph{universal cycle} for a family $\mathcal{F}$ of combinatorial objects is a cyclic sequence whose consecutive substrings of a given length $n$ represent each object of $\mathcal{F}$ exactly once. The canonical type of universal cycle is a De Bruijn sequence, which is a universal cycle for the words of length $n$ over an alphabet $\mathcal{A}$. De Bruijn sequences have been widely studied \cite{uniwww}. Chung, Diaconis, and Graham~\cite{CDG92} introduced the notion of universal cycles 
	for other combinatorial objects such as permutations, sets, and set partitions. A \emph{universal word}  
	for $\mathcal{F}$ is the non-cyclic analogue of a universal cycle. A universal cycle has length $\abs{\mathcal{F}}$, while a universal word has length $|\mathcal{F}|+(n-1)$. 
	
	In this paper, we are focusing on permutations. 
	We use $[n]$ to denote $\{1,2,\ldots,n\}$ and $S_n$ to denote all permutations of $[n]$.
	A \emph{universal word} for $S_n$ is a word $w$ over $\mathbb{N}$ such that each permutation in $S_n$ is order-isomorphic to exactly one consecutive substring of length $n$. 
	Notice that entries in $w$ are not restricted to $[n]$. 
	For example, $14524314$ is a universal word for $S_3$. The permutations in $S_3$ represented by $w$ from left to right are $123$, $231$, $312$, $132$, $321$, and $213$. See Figure~\ref{fig:intro} for a way to plot the word that depicts the order of its letters. We use similar figures throughout the paper to indicate relative order of letters.
	A \emph{universal cycle} for $S_n$ is a cyclic universal word. For example, $145243$ is a universal cycle for $S_3$. Note that a universal word has length $n!+(n-1)$, and a universal cycle has length $n!$. Hurlbert \cite{H90} showed that universal cycles for $S_n$ exist for all $n$. 
	Chung, Diaconis, and Graham~\cite{CDG92} conjectured that it is sufficient to use $n+1$ distinct numbers in a universal cycle for $S_n$, which would be best possible for $n \geq 3$.
	This conjecture was proved constructively by Johnson~\cite{J09}. In addition, constructions of universal cycles for permutations, and variations thereof, have been studied in \cite{J93, GKSZ19, RW10, HRW12, W17, HH13weak, HH13perm, AW09, GSWW20, KPV19}.
	Universal cycles for permutations have applications in various areas such as
	molecular biology~\cite{Compeau2011},
	computer vision~\cite{Pags2005},
	robotics~\cite{Scheinerman2001}, and
	psychology~\cite{Sohn1997}.
	
	\tikzset{
		pvtx/.style={inner sep=1.7pt, outer sep=0pt, circle, fill=black,draw=black}, 
	}
	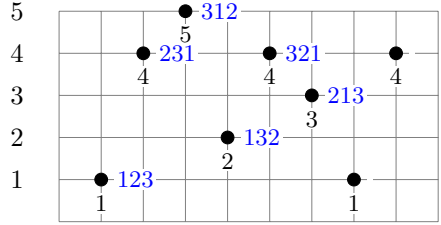
\begin{figure}
		\begin{center}
			\begin{tikzpicture}[scale=0.56]
				\draw[help lines](0,0) grid (9,5);
				\foreach \x/\y/\z in {1/1/123,2/4/231,3/5/312,4/2/132,5/4/321,6/3/213,7/1/ ,8/4/ }{
					\draw(\x,\y) node[pvtx,label=right:{ }]{};
					\draw(\x,\y-0.3) node[below,fill=white,inner sep=1pt]{\small \y};
					\draw(\x+0.3,\y) node[right,fill=white,inner sep=1pt]{\color{blue}\small \z};
				}
				\foreach \y in {1,2,3,4,5}{
					\draw (-1,\y) node{\y};
				}
			\end{tikzpicture}
		\end{center}
		\caption{A universal word 14524314 for $S_3$ depicted in a grid from left to right. A blue label to the right of a node indicates a permutation of $S_3$ starting at the node.}
		\label{fig:intro}
	\end{figure}

	Universal cycles, for permutations and more generally for $\mathcal{F}$, are useful because they represent the elements of $\mathcal{F}$ compactly. In recent years there has been interest in shortening universal cycles to compress information even further. (These efforts are not to be confused with the study of shorthand universal cycles for permutations, which are shorthand in the sense of using only $n$ distinct symbols but have the same length, $n!$, as universal cycles for $S_n$; see \cite{HRW12, RW10, GSWW20}. Here shortening means reducing the length of the cycle.) 
	De Bruijn sequences have been shortened, to \emph{universal partial cycles} and \emph{universal partial words}, using a wildcard symbol $\diamond$ that covers any letter of the alphabet, so that a window of length $n$ may cover more than one word of length $n$; see \cite{CKMS17,G18,FGKMM21}. Graph universal cycles, introduced in \cite{BKS10}, and graph universal cycles for permutations, introduced in \cite{CGGP21}, have been shortened in \cite{KSS22}.
	
	Kitaev, Potapov, and Vajnovszki~\cite{KPV19} shortened universal words for permutations in two different ways: with wildcard symbols and with incomparable elements. Similarly to universal partial words and universal partial cycles, they considered using the wildcard symbol $\diamond$, which yielded many nonexistence results, but they created shortened universal words for permutations using a wildcard symbol $\diamond_{\set{a,b}}$ that covers either of the two elements $a$ and $b$.
	
	Most relevantly to our work, in \cite{KPV19}, Kitaev, Potapov, and Vajnovszki used incomparable elements at distance $n-1$ to shorten universal words for permutations of $[n]$ to lengths $n! +n-1-i(n-1)$ for each $i \in [(n-2)!]$. A word $w$ of length $n$ with incomparable elements \emph{covers} all permutations of length $n$ that are linear extensions of the order given by $w$.
	In the case of incomparable elements at distance $n-1$, $w$ covers two permutations. 
	For example, the word $2132$ covers the two permutations $2143$ and $3142$. We also say that a longer word $v$ containing $w$ as a consecutive substring \emph{covers} the permutations that $w$ covers. For example, $4321324$ contains $2132$ as a consecutive substring, so $4321324$ covers $2143$ and $3142$; the other $4$-permutations that $4321324$ covers are $4321$, $3214$, $4213$, and $1324$.
	
	\begin{theorem}[Kitaev, Potapov, and Vajnovszki~\cite{KPV19}]\label{thm:uword}
		Using incomparable elements at distance $n-1$, one can obtain shortened universal words for $S_n$ of lengths 
		$n!+n-1-i(n-1)$ for each $0 \leq i \leq (n-2)!$.
	\end{theorem}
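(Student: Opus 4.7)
My plan is to prove Theorem~\ref{thm:uword} by induction on $i$. The base case $i=0$ is immediate from the existence of a universal word of length $n!+n-1$, for example from the constructive proof of Johnson~\cite{J09}. For the inductive step, I would describe a local ``compression'' operation that, given a valid shortened word of length $n!+n-1-(i-1)(n-1)$ with $(i-1)(n-1)$ incomparable pairs at distance $n-1$, produces a valid word of length $n!+n-1-i(n-1)$ with $i(n-1)$ such pairs.

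A single compression introduces $n-1$ additional incomparable pairs at distance $n-1$ into the word, arranged in a coordinated pattern. Each new pair $(w_j, w_{j+n-1})$ turns the window at positions $[j, j+n-1]$ from a single-coverage window into a double-coverage one, since the induced poset on that window has exactly two linear extensions. Thus the $n-1$ new pairs together convert $n-1$ windows into double-coverage windows, contributing $n-1$ additional permutation-coverages overall. To preserve the total coverage at $n!$ permutations, the word must simultaneously be shortened by $n-1$ positions, which reduces the window count by $n-1$. The design constraint is that the ``extra'' permutations acquired through the new double-coverage windows must coincide exactly with the permutations that are no longer covered after the deletion, so that each element of $S_n$ remains covered exactly once.

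To iterate $(n-2)!$ times, I would need $(n-2)!$ mutually compatible compression sites in the starting universal word $W_0$. My plan is to take $W_0$ from an explicit construction with high symmetry (such as~\cite{J09}), and to identify $(n-2)!$ natural sites, one per choice of ``core'' $(n-2)$-pattern around which the compression's chain of pairs can be centered. Once the sites are identified, the inductive argument proceeds site by site, with each compression reducing the length by exactly $n-1$.

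The main obstacle will be verifying that the $(n-2)!$ compressions do not interfere. Each compression modifies a local region of length on the order of $n^2$, and these modifications must neither invalidate other windows of $W_0$ nor destroy the validity of any subsequent compression. I would expect this to reduce to showing that Johnson's construction (or a suitable variant) contains $(n-2)!$ copies of the required substructure at mutually non-overlapping locations, together with an inductive check that each successive compression preserves the universal-word property of the remaining word.
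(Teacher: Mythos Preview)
The paper does not itself prove Theorem~\ref{thm:uword}; it is quoted from Kitaev, Potapov, and Vajnovszki~\cite{KPV19}. What the paper does relay about that proof is that it works in the cluster graph $G$ for $n$-permutations, not on an explicit universal word. In $G$, every cluster contains exactly one pair of \emph{twins}, represented by a pair of parallel edges; these parallel pairs decompose into $(n-2)!$ edge-disjoint cycles, each of length $n-1$ (Lemma~\ref{lem:kpv}). Compressing the twins along any $i$ of these cycles---replacing each parallel pair by a single edge with label $x_1x_2\cdots x_{n-1}x_1$---yields a balanced, strongly connected multigraph on $n!-i(n-1)$ edges. An Eulerian trail of that graph, read out as in Lemma~\ref{lem:trail}, is the desired word of length $n!+n-1-i(n-1)$ with $i(n-1)$ incomparable pairs at distance $n-1$.

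Your plan is a genuinely different route: start from Johnson's explicit word and iteratively perform a word-level compression. The cluster-graph argument sidesteps precisely the obstacle you flag, because it never modifies an existing word; it builds the shortened word from scratch, and ``non-interference'' of the $(n-2)!$ compressions is automatic from the edge-disjointness of the twin cycles in $G$. In your outline, by contrast, the compression operation is never actually defined, and the assertion that Johnson's word contains $(n-2)!$ mutually non-overlapping sites of the required shape is speculation rather than argument. Moreover, after the first compression the word is no longer Johnson's, so an inductive appeal to the symmetry of that specific construction does not survive even one step; you would need an invariant that is preserved by your (still unspecified) operation. As it stands the proposal is a plan with the hard parts---(i) define the compression, (ii) locate the sites, (iii) prove they remain valid after earlier compressions---left open, whereas the cluster-graph approach gets all three essentially for free from Lemma~\ref{lem:kpv} and the Eulerian property.
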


	They conjecture \cite[Conjecture 8]{KPV19} that their result may be strengthened by obtaining shortened universal cycles instead of shortened universal words. Here we prove their conjecture.
	
	\begin{restatable}{theorem}{MainThm}\label{thm:conj8}
		For $n \ge 3$ and each $0 \leq i \leq (n-2)!$, using incomparable elements at distance $n-1$, one can obtain a shortened universal cycle for $S_n$ of length $n!-i(n-1)$.
	\end{restatable}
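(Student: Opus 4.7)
My plan is to prove Theorem~\ref{thm:conj8} constructively, producing for each $0 \le i \le (n-2)!$ an explicit shortened universal cycle of length $n!-i(n-1)$. The approach has two ingredients: a new construction of an unshortened universal cycle for $S_n$ (the $i=0$ case promised by the abstract) whose internal structure organizes the $n!$ windows into $(n-2)!$ independent \emph{arcs}, and a local modification that shortens an arc by $n-1$ letters while introducing $n-1$ incomparable-pair windows. Applying the modification to any $i$ of the arcs then yields a cycle of length $n!-i(n-1)$.

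For the construction, I would index arcs by permutations $\sigma \in S_{n-2}$, with the $\sigma$-arc consisting of exactly the $n(n-1) = n!/(n-2)!$ windows whose ``middle'' $n-2$ letters have relative order $\sigma$; these arcs partition both the cycle's windows and (after suitable design) its positions, and existing constructions (for example, those in \cite{J09, GKSZ19}) can be adapted for this purpose. For the shortening, inside the $\sigma$-arc I would locate $n-1$ consecutive letters that agree in value with the next $n-1$ letters of the arc (this should be arrangeable by design, because within an arc the middle-letter sequence is rigidly constrained by $\sigma$), collapse the duplicated copy to save $n-1$ letters, and observe that the $n-1$ windows that formerly straddled the duplication now have equal endpoints at distance $n-1$, i.e., they have become incomparable-pair windows. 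The counting then checks out: after $i$ collapses there are $n!-i(n-1)$ windows in total, of which $i(n-1)$ are incomparable (covering $2i(n-1)$ permutations) and the remaining $n!-2i(n-1)$ are normal (covering $n!-2i(n-1)$ permutations), so all $n!$ permutations are still covered exactly once.

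The main obstacle is correctness: I must show that the $n-1$ incomparable windows produced by collapsing an arc cover exactly the $2(n-1)$ permutations that would have been covered by the pre-collapse windows, with no permutation left uncovered or double-covered. This is a case analysis inside a single arc, leveraging the fixed middle-order $\sigma$ to pin down which two permutations each incomparable window represents. A subsidiary difficulty is the cyclic closure: the boundary windows between adjacent arcs must remain valid regardless of which subset of arcs has been collapsed, which forces the collapse site to sit strictly interior to each arc. A superficially attractive alternative --- closing up the shortened universal word of Theorem~\ref{thm:uword} into a cycle by identifying its first and last $n-1$ letters --- would require that word to end with its own starting block, which is not automatic, so I would pursue the direct arc-based construction first, verifying it by hand on $n=3$ (where the length-$4$ cycle $2123$ is the target) and $n=4$ before attacking the general case.
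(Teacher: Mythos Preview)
Your counting is right and the $(n-2)!$ count of independently shortenable pieces is the correct target, but the arc decomposition you propose cannot exist for $n \ge 4$. If the $\sigma$-arc is a contiguous stretch of the cycle containing all $n(n-1)$ windows whose middle $n-2$ letters reduce to $\sigma$, then consecutive windows share $n-3$ of their $n-2$ middle positions, and forcing every middle to reduce to the same $\sigma$ chains these overlaps into a rigid global constraint. Concretely, for $n=4$ and $\sigma = 12$ the twelve windows starting at positions $j,\ldots,j+11$ have middles $(w_{j+1},w_{j+2}),\ldots,(w_{j+12},w_{j+13})$; requiring each to be increasing forces $w_{j+1} < w_{j+2} < \cdots < w_{j+13}$, and then the windows at positions $j+1$ through $j+10$ are all order-isomorphic to $1234$, contradicting universality. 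The same obstruction arises for every $n \ge 4$ and every $\sigma$, so these ``arcs'' cannot be made contiguous.

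Your collapse step has a related problem: it asks the \emph{unshortened} cycle to contain two identical consecutive $(n-1)$-blocks, but then the length-$n$ window straddling their boundary already has equal first and last letters and is not a permutation window, so it cannot occur in an unshortened universal cycle in the first place. The correct $(n-2)!$-fold structure lives in the cluster graph rather than in the word: each cluster contains a unique pair of \emph{twins} (permutations with $|\pi_1-\pi_n|=1$), these twin pairs are exactly the parallel edges of the cluster graph, and by Lemma~\ref{lem:kpv} they decompose into $(n-2)!$ disjoint double-edge cycles of length $n-1$. The paper compresses any $i$ of these twin-cycles to single-edge cycles, removes a fixed tour $\mathcal{P}$ of $2n$ specific permutations, proves the remainder is still Eulerian (Lemma~\ref{lem:connected}), converts an Euler trail into a word (Lemma~\ref{lem:trail}), and finally uses $\mathcal{P}$ to glue the two ends of the word into a cycle (Lemma~\ref{lem:glue}). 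It is this gluing lemma --- not any local periodicity in the word --- that handles the cyclic-closure difficulty you correctly identified as the main obstacle.
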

	
	Our proof is constructive and does not attempt to control the number of distinct symbols used. Examples of shortened universal cycles for $S_4$ arising from the construction include \[(1, 2, 3, 12, 4, 10, 9, 8, 11, 7, 8, 6, 9, 10, 6, 5, 9, 7, 10, 8, 3) \quad \text{and} \quad(1, 2, 3, 12, 6, 8, 7, 6, 8, 5, 9, 8, 10, 11, 8, 10, 6, 3).\]
	See the Appendix for more detailed illustrations. The running time is $O((n!)^2)$, which comes from finding the Euler tour in a graph with $n!$ edges and $(n-1)!$ vertices in quadratic time, and finding a cyclic word representing the tour, which involves relabeling at most $n!$ symbols at most $n!$ times. The memory required to store the output is $O(n!(\log(n!)))$.
	
	Getting from Theorem~\ref{thm:uword} to Theorem~\ref{thm:conj8} requires several background definitions which we provide forthwith.
	For a word $w$ over comparable letters, we denote by $\reduce(w)$ the word obtained from $w$ by replacing each copy of the $i$th smallest element of $w$ by $i$. For example, $\reduce(37361) = 24231$. We say that $w$ \emph{reduces to} $\reduce(w)$, and words $w$ and $v$ satisfying $\reduce(w) = \reduce(v)$ are \emph{order-isomorphic}.
	
	For any $n$, the \emph{graph of overlapping $n$-permutations} 
	is a directed graph on $n!$ vertices, each vertex corresponding to one permutation in $S_n$.
	There is an edge from $x=x_1x_2\cdots x_n$ to $y = y_1y_2\cdots y_n$ iff $\reduce(x_2x_3\cdots x_{n}) = \reduce(y_1y_2\cdots y_{n-1})$,
	see Figure~\ref{fig:overlapgraph}.
	Notice that a \universal{}cycle for $S_n$ gives a Hamiltonian cycle in the graph of overlapping $n$-permutations. 
	
	\begin{figure}
		\begin{center}
			\tikzset{
				similarp/.style={fill=gray!10!white,draw=black}, 
				pbox/.style={inner sep=0pt, outer sep=0pt},
				block/.style={draw,rectangle, minimum width={0.8 cm},minimum height={1.2 cm},outer sep=2pt},
				pair/.style={draw,rectangle, minimum width={0.8 cm},minimum height={0.8 cm},outer sep=-3pt,fill=gray!10!white},
				paraedgeRB/.style={-Latex,double=red!30!white,draw=blue,double distance=2pt},
				paraedgeR/.style={-Latex,double=red!30!white,draw=black,double distance=2pt},
				paraedgeB/.style={-Latex,double,draw=blue,double distance=2pt},
				paraedge/.style={-Latex,double,double distance=2pt},
			}
			\begin{tikzpicture}[scale=2]
				\draw[every node/.style={draw,circle}]
				(00:1) node(123){123}
				(60:1) node(132){132}
				(120:1) node(213){213}
				(180:1) node(231){231}
				(240:1) node(312){312}
				(300:1) node(321){321}
				;
				\draw[-latex](123)--(231);
				\draw[-latex](123)--(132);
				\draw[-latex](123) to[out=-30,in=30,looseness=3](123);
				
				\draw[-latex](213)to[bend left=10](231);
				\draw[-latex](213)to[bend left=10](132);
				\draw[-latex](213)--(123);
				
				\draw[-latex](312)to[bend left=10](231);
				\draw[-latex](312) to[bend left=10] (132);
				\draw[-latex](312)--(123);
				
				\draw[-latex](132)to[bend left=10](213);
				\draw[-latex](132) to[bend left=10] (312);
				\draw[-latex](132)--(321);
				
				\draw[-latex](231)to[bend left=10](213);
				\draw[-latex](231)to[bend left=10](312);
				\draw[-latex](231)--(321);
				
				\draw[-latex](321)--(213);
				\draw[-latex](321)--(312);
				\draw[-latex](321)to[out=-30,in=30,looseness=3] (321);

				\begin{scope}[scale=0.2,xshift=14cm,yshift=2cm]
					\tikzstyle{every node}=[inner sep=0pt, outer sep=0pt]
					\draw (0,-2.5) node[pair](121){}; 
					\draw (0,-1)  node(123){123};
					\draw (0,-2)  node(231){231};
					\draw (0,-3)  node(132){132};
					\draw (0,-2) node[block](12){};
					\draw (-2,-2) node{\it 12};

					\begin{scope}[xshift=6cm,yshift=0cm]
						\draw (0,-1.5) node[pair](212){}; 
						\draw (0,-1)  node(213){213};
						\draw (0,-2)  node(312){312};
						\draw (0,-3)  node(321){321};
						\draw (0,-2) node[block](21){};
						\draw (2,-2) node{\it 21};
					\end{scope}
					
					\draw[-latex] (213) to[out=180,in=20] (12);
					\draw[-latex] (312) to[out=180,in=10] (12);

					\draw[-latex,transform canvas={yshift=0ex}](231) to[out=-10,in=190] (21);
					\draw[-latex,transform canvas={yshift=0ex}](132) to[out=-10,in=200] (21);

					\draw[-latex](321) to[out=0,in=270,looseness=5] (21);
					
					\draw[-latex](123) to[out=180,in=90,looseness=5] (12);
				\end{scope}
				
			\end{tikzpicture}
		\end{center}
		\caption{The graph of overlapping $3$-permutations on the left and the cluster graph for $3$-permutations on the right.}
		\label{fig:overlapgraph}
	\end{figure}
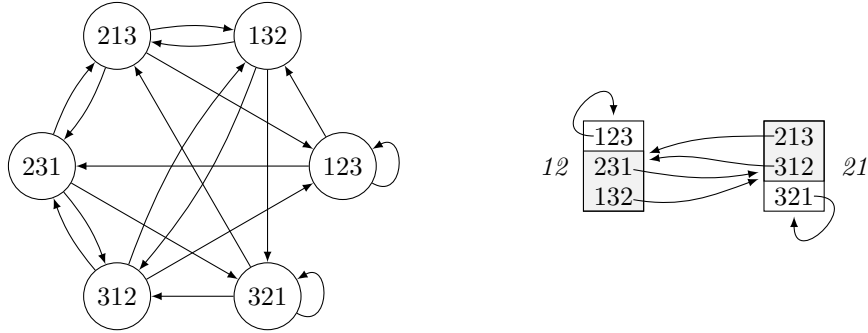

	The cluster graph, where a \universal{}cycle corresponds to an Eulerian tour rather than a Hamiltonian cycle, has also been used to study universal cycles for $S_n$; see \cite{CDG92}. The \emph{cluster graph} for $n$-permutations, denoted here by $G$, identifies all permutations $x$ and $y$ where $\reduce(x_1\cdots x_{n-1})=\reduce(y_1\cdots y_{n-1})$. This means the edges are grouped by their origin. Its vertices are the clusters of $n$-permutations whose first $n-1$ entries are order-isomorphic, and its edges are the $n$-permutations: it has an edge from $\sigma$ to $\tau$ for each $n$-permutation $x$ satisfying $\sigma=\reduce(x_1\cdots x_{n-1})$ and $\tau = \reduce(x_2\cdots x_n)$. This means that each cluster contains $n$ permutations, and the cluster graph is a directed multigraph. For example, on the right in Figure~\ref{fig:overlapgraph} is the cluster graph for $3$-permutations, where each edge corresponds to the $3$-permutation at its head. Compressing the parallel edges of the cluster graph for $n$-permutations to single edges yields the graph of overlapping $(n-1)$-permutations. For example, the cluster graph for $4$-permutations is shown in Figure \ref{fig:clusters}, and after compressing parallel edges it is the same as the graph of overlapping $3$-permutations shown on the left of Figure \ref{fig:overlapgraph}. The cluster graph $G$ is balanced and strongly connected, as observed in \cite{CDG92}.
	
	Any Eulerian tour in the cluster graph gives a Hamiltonian cycle in the graph of overlapping $n$-permutations. It is conjectured that these Hamiltonian cycles can be extended to \universal{}cycles for $n$-permutations; see~\cite{CDG92,H90}.

	For words, translating a Hamiltonian cycle in the De Bruijn graph to a De Bruijn sequence is straightforward since the \universal{}cycle uses just $n$ letters.
	This is not the case for $n$-permutations. A \universal{}cycle or a \universal{}word for $n$ permutations may use many more distinct entries than $n$. Building a \universal{}cycle for $n$-permutations by following a Hamiltonian cycle in the graph of overlapping $n$-permutations could possibly lead to a situation where the beginning and the end are not compatible. 
	To illustrate this potential misalignment, consider the following simple example.
	Recall $S_3$ has a \universal{}word  
	14524314 that can be turned into \universal{}cycle by removing the last two letters. On the other hand 
	14625415 is still a \universal{}word, but removing the last two letters does not turn it into a \universal{}cycle since it would contain a sub-word 414.
	It is, however, easy to construct a \universal{}word by following a Hamiltonian path.
	
	The proof of Theorem~\ref{thm:uword} utilizes the cluster graph of $n$-permutations and performs a compression on the cluster graph. 
	We use the same shortening ideas as Kitaev, Potapov, and Vajnovszki~\cite{KPV19}.  
	In order to obtain a \universal{}cycle instead of a \universal{}word,
	we identify a particular part of the cluster graph that allows for the beginning and the end of the word coming from traversing an Eulerian tour to be connected.
	We use many of the lemmas as Kitaev, Potapov, and Vajnovszki~\cite{KPV19}, we include also proofs for completeness.

	\section{Proof of Theorem~\ref{thm:conj8}}
	
	We start by introducing and restating some technical definitions. Then we describe several properties of the cluster graph from Kitaev, Potapov, and Vajnovszki~\cite{KPV19}. Finally, we describe the procedure to obtain a shortened \universal{}cycle.

	Two permutations $\pi_1\cdots \pi_n$ and $\sigma_1\cdots \sigma_n$ are called \emph{twins} if they belong to the same cluster and $|\pi_1-\pi_n| = |\sigma_1-\sigma_n| = 1$. 
	For example, $3142$ and $2143$ are twins, and $134562$ and $234561$ are twins. 
	Pairs of twins are depicted in Figure~\ref{fig:clusters} in gray boxes, and they correspond to a pair of parallel edges. 
	
	The cluster graph $G$ is balanced and strongly connected, as observed in \cite{CDG92}. Recall $G$ has an edge $e$ from $\sigma$ to $\tau$ for each $n$-permutation $x$ satisfying $\sigma=\reduce(x_1\cdots x_{n-1})$ and $\tau = \reduce(x_2\cdots x_n)$. We write $L(e)$ for $x$. Later, when we compress pairs of twins and have only one edge $e$ from $\sigma$ to $\tau$, we write $L(e)$ for $x=x_1\cdots x_n \in \mathbb{N}^n$, where $x_1=x_n$, $\sigma=\reduce(x_1\cdots x_{n-1})$, and $\tau = \reduce(x_2\cdots x_n)$.

	\tikzset{
		similarp/.style={fill=gray!20!white,draw=black}, 
		pbox/.style={inner sep=0pt, outer sep=0pt},
		block/.style={draw,rectangle, minimum width={0.8 cm},minimum height={1.6 cm},outer sep=2pt},
		pair/.style={draw,rectangle, minimum width={0.8 cm},minimum height={0.8 cm},outer sep=-3pt,fill=gray!10!white},
		paraedgeRB/.style={-Latex,double=red!30!white,draw=blue,double distance=2pt},
		paraedgeR/.style={-Latex,double=red!30!white,draw=black,double distance=2pt},
		paraedgeB/.style={-Latex,double,draw=blue,double distance=2pt},
		paraedge/.style={-Latex,double,double distance=2pt},
		paraedgeX/.style={-Latex,transform canvas={yshift=+0.3ex}},
		paraedgeY/.style={-Latex,transform canvas={yshift=-0.3ex}},
	}
	\newcommand{\bshift}{0.5}

	\tikzset{edge_color8/.style={color=red,line width=1.2pt,postaction={draw,blue,line width=1.2pt,decorate,decoration={simple line,raise=1.2pt},postaction={draw,green,line width=1.2pt,decorate,decoration={simple line,raise=-2.4pt}}}}}
	
	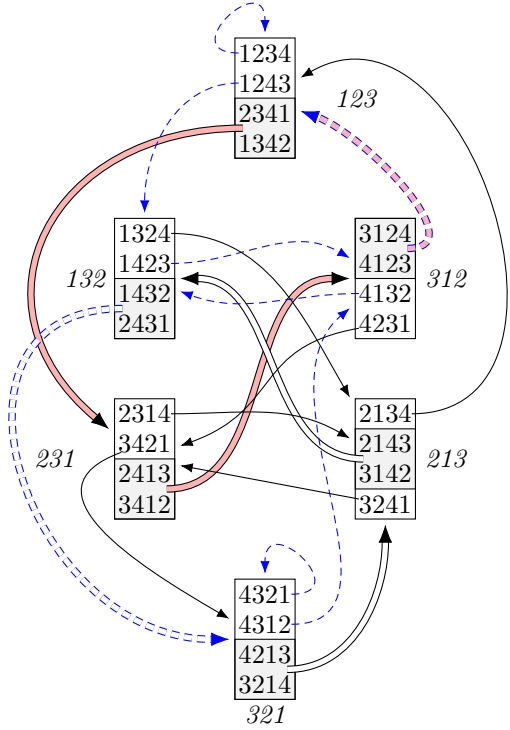
\begin{figure}
		\begin{center}
			\begin{tikzpicture}[scale=0.4]
				\clip (-9,-5) rectangle (9,20);
				\tikzstyle{every node}=[inner sep=0pt, outer sep=0pt]
				\draw (0,-2.5) node[pair](3213){}; 
				\draw (0,0)  node(4321){4321};
				\draw (0,-1)  node(4312){4312};
				\draw (0,-2)  node(4213){4213};
				\draw (0,-3)  node(3214){3214};
				\draw (0,-1.5) node[block](321){};
				\draw (0,-4) node{\it 321};
				
				\begin{scope}[xshift=4cm,yshift=6cm]
					\draw (0,-1.5) node[pair](2132){}; 
					\draw (0,0)  node(2134){2134};
					\draw (0,-1)  node(2143){2143};
					\draw (0,-2)  node(3142){3142};
					\draw (0,-3)  node(3241){3241};
					\draw (0,-1.5) node[block](213){};
					\draw (2,-1.5) node{\it 213};
				\end{scope}

				\begin{scope}[xshift=-4cm,yshift=6cm]
					\draw (0,-2.5) node[pair](2312){}; 
					
					\draw (0,0)  node(2314){2314};
					\draw (0,-1)  node(3421){3421};
					\draw (0,-2)  node(2413){2413};
					\draw (0,-3)  node(3412){3412};
					\draw (0,-1.5) node[block](231){};
					\draw (-3,-1.5) node{\it 231};
				\end{scope}
				
				\begin{scope}[xshift=-4cm,yshift=12cm]
					\draw (0,-2.5) node[pair](1321){}; 
					
					\draw (0,0)  node(1324){1324};
					\draw (0,-1)  node(1423){1423};
					\draw (0,-2)  node(1432){1432};
					\draw (0,-3)  node(2431){2431};
					\draw (0,-1.5) node[block](132){};
					\draw (-2,-1.5) node{\it 132};
				\end{scope}
				
				\begin{scope}[xshift=4cm,yshift=12cm]
					\draw (0,-0.5) node[pair](3123){}; 
					\draw (0,0)  node(3124){3124};
					\draw (0,-1)  node(4123){4123};
					\draw (0,-2)  node(4132){4132};
					\draw (0,-3)  node(4231){4231};
					\draw (0,-1.5) node[block](312){};
					\draw (2,-1.5) node{\it 312};
				\end{scope}

				\begin{scope}[xshift=0cm,yshift=18cm]
					\draw (0,-2.5) node[pair](1231){}; 
					\draw (0,0)  node(1234){1234};
					\draw (0,-1)  node(1243){1243};
					\draw (0,-2)  node(2341){2341};
					\draw (0,-3)  node(1342){1342};
					\draw (0,-1.5) node[block](123){};
					\draw (3,-1.5) node{\it 123};
				\end{scope}

				\draw[paraedgeRB,densely dashed](3123) to[out=0,in=-20] (123);
				\draw[paraedgeR](1231) to[out=180,in=140,looseness=1.5](231);
				\draw[paraedgeR](2312) to[out=0,in=180] (312);
				
				\draw[paraedge](3213) to[out=0,in=270] (213);
				\draw[paraedge](2132) to[out=180,in=0] (132);
				\draw[paraedgeB,densely dashed](1321) to[out=180,in=180,looseness=1.5] (321);

				\draw[-Latex,draw=blue,densely dashed](1234) to[out=180,in=90,looseness=5] (123);
				\draw[-Latex,draw=blue,densely  dashed](1243) to[out=180,in=90,looseness=1] (132);
				\draw[-Latex,draw=blue,densely  dashed](1423) to[out=0,in=150,looseness=1] (312);
				\draw[-Latex,draw=blue,densely  dashed](4132) to[out=180,in=-20,looseness=1] (132);
				\draw[-Latex,draw=blue,densely  dashed](4321) to[out=0,in=90,looseness=5] (321);
				\draw[-Latex,draw=blue,densely  dashed](4312) to[out=0,in=220,looseness=1] (312);

				\draw[-Latex](3241) to (231);
				\draw[-Latex](3421) to[out=200,in=150,looseness=1.4] (321);
				\draw[-Latex](2314) to[out=0,in=150] (213);
				\draw[-Latex](2134) to[out=0,in=30,looseness=1.4] (123);
				\draw[-Latex](1324) to[out=0,in=120,looseness=1] (213);
				\draw[-Latex](4231) to[out=190,in=20,looseness=1.3] (231);
				
			\end{tikzpicture}
		\end{center}
		\caption{Cluster graph for $4$-permutations. 
			Grey boxes indicate pairs of twins. 
			A double edge represents parallel edges corresponding to the permutations in the gray box.
			Double edges with filled insides and white insides form two cycles that could be used for shortening the \universal{}cycle.
			Blue dashed edges denote permutations in the family $\mathcal{P}^\star$, defined in the upcoming equation (\ref{eq:pstar}). They are used in the gluing part depicted in Figure~\ref{fig:glueZ}.
		}
		\label{fig:clusters}
	\end{figure}

	The proof of Theorem~\ref{thm:conj8} is constructive. Here is a sketch of the proof, omitting compression of twins.
	\begin{sketch}~
		
		\begin{enumerate}
			\item In the cluster graph $G$, we identify a cycle given by permutations $\mathcal{P}$ defined in the upcoming Eq \eqref{eq:G}.
			\item We remove them from $G$ and find a word $w$ whose $n$-windows are the permutations of $S_n\setminus \mathcal{P}$, exactly once each.
			\begin{enumerate}
				\item Lemma \ref{lem:connected} guarantees that $G-\mathcal{P}$ is strongly connected, which we use to show $G-\mathcal{P}$ is Eulerian.
				\item Given an Eulerian trail of $G-\mathcal{P}$, Lemma~\ref{lem:trail} shows how to build $w$. 
			\end{enumerate}
			\item Finally, we extend $w$ to a cyclic word that includes $\mathcal{P}$ in Lemma~\ref{lem:glue}.
		\end{enumerate}
	\end{sketch}
	
	A series of Lemmas in \cite{KPV19} describe the following additional properties of the cluster graph.
	
	\begin{lemma}[Kitaev, Potapov, and Vajnovszki~\cite{KPV19}]\label{lem:kpv}
		The following are true for each cluster graph $G$.
		\begin{enumerate}
			\item[(i)] Each cluster has exactly one pair of twins.
			\item[(ii)] For each cluster $X$, there exists a unique cluster $Y$ such that there are two edges from $X$ to $Y$. Also, there are no clusters $X$ and $Y$ such that there are three or more edges from $X$ to $Y$.
			\item[(iii)] For each cluster $Y$, there exists a unique cluster $X$ such that there are two edges from $X$ to $Y$.
			\item[(iv)] Any of the disjoint cycles formed by the double edges goes through exactly $n-1$ distinct clusters.
		\end{enumerate}
	\end{lemma}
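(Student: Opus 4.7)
The plan is to parametrize each cluster by the $(n-1)$-permutation $\tau \in S_{n-1}$ that it represents and to index its $n$ member permutations by the last entry $k = \pi_n \in [n]$. A single observation underlies all four parts: writing $\pi^{(k)}$ for the element of cluster $\tau$ with last entry $k$, the prefix $\pi^{(k)}_1 \cdots \pi^{(k)}_{n-1}$ is obtained from $\tau$ by incrementing every entry that is $\ge k$. In particular,
\[
\pi^{(k)}_1 = \begin{cases} \tau_1 & \text{if } k > \tau_1, \\ \tau_1 + 1 & \text{if } k \le \tau_1. \end{cases}
\]

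For (i), the condition $|\pi_1 - \pi_n| = 1$ collapses under these two cases to $k \in \{\tau_1, \tau_1 + 1\}$, yielding exactly one pair of twins per cluster. For (ii), suppose $\pi^{(k)}$ and $\pi^{(k')}$ in the same cluster $\tau$ share a reduced suffix. The last letter of $\reduce(\pi^{(k)}_2 \cdots \pi^{(k)}_n)$ is the rank of $k$ among $\pi^{(k)}_2, \ldots, \pi^{(k)}_n$, which a short count gives as $k$ if $k \le \tau_1$ and $k - 1$ if $k > \tau_1$; the only rank attained twice is $\tau_1$, hit by $k \in \{\tau_1, \tau_1+1\}$, so necessarily $\{k, k'\} = \{\tau_1, \tau_1+1\}$. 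A direct check then shows this twin pair really does produce the same reduced suffix: the middle blocks $\pi_2 \cdots \pi_{n-1}$ literally coincide (the two incrementing rules differ only at value $\tau_1$, which is absent from $\tau_2 \cdots \tau_{n-1}$), and the trailing rank is $\tau_1$ in both. This proves both clauses of (ii).

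For (iii) and (iv), define $\phi(\tau)$ to be the common reduced suffix of the twin pair in cluster $\tau$, i.e., the target of the double edge leaving $\tau$. Working with the twin that has $\pi_n = \tau_1$ (hence $\pi_1 = \tau_1 + 1$) and applying the prefix formula, a short calculation shows that for each $i \in \{2, \ldots, n-1\}$, either $\tau_i \le \tau_1 - 1$ (so $\pi_i = \tau_i$ is unaffected when the reduction removes $\tau_1+1$) or $\tau_i \ge \tau_1 + 1$ (so $\pi_i = \tau_i + 1$ is decremented back to $\tau_i$), while $\pi_n = \tau_1$ reduces to $\tau_1$. Hence
\[
\phi(\tau) = \tau_2 \tau_3 \cdots \tau_{n-1} \tau_1,
\]
the cyclic left-shift on $S_{n-1}$. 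This is a bijection, proving (iii). For (iv), each $\phi$-orbit has length equal to the period of $\tau$ as a cyclic word; since the entries of $\tau$ are distinct, no period $d < n-1$ is possible, so every orbit has length exactly $n-1$.

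The only step requiring real care is identifying $\phi$ explicitly as the cyclic shift; once that formula is in hand, (iii) and (iv) are immediate, and the case analyses for (i) and (ii) follow routinely from the prefix formula.
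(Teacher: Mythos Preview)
Your proof is correct. The paper itself does not prove this lemma; it is quoted from Kitaev, Potapov, and Vajnovszki~\cite{KPV19} and used as a black box, so there is no ``paper's own proof'' to compare against beyond the citation.

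Your argument is clean and self-contained. The key step---showing that the double edge out of cluster $\tau$ lands at the cyclic left-shift $\phi(\tau)=\tau_2\tau_3\cdots\tau_{n-1}\tau_1$---is exactly what makes (iii) and (iv) fall out immediately, and your derivation of it from the prefix formula $\pi^{(k)}_i=\tau_i+\mathbf{1}[\tau_i\ge k]$ is correct. The injectivity-of-last-letter argument for (ii) is a nice shortcut: since the rank of $\pi_n$ in the suffix is $k$ for $k\le\tau_1$ and $k-1$ for $k>\tau_1$, the only collision is at $\{\tau_1,\tau_1+1\}$, ruling out any third parallel edge and forcing the unique double edge to come from the twin pair. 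Your verification that the twin pair really does yield identical reduced suffixes (the middle block is literally the same word, and both tails reduce to $\tau_1$) closes the loop.

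One minor presentational point: in the summary of (iv) you appeal to ``period of $\tau$ as a cyclic word,'' which is fine, but you might spell out for the reader that a permutation of $[n-1]$ cannot equal any nontrivial cyclic shift of itself because that would force a repeated value. This is implicit in ``the entries of $\tau$ are distinct,'' but making it explicit costs one clause.
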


	Let $\mathcal{P}$ be the following set of $2n$ permutations:
	\begin{align}\label{eq:G}
		\mathcal{P} = 
		\left\{  
		\begin{aligned}
			&~(n, n-1, \dots, 2, 1),       &  &~(1, 2, \dots, n - 2, n, n-1), \\
			&~(n, n-1, \dots, 3, 1, 2),    &  &~(1, 2, \dots, n - 3, n, n-2, n-1), \\
			&~(n, n-1, \dots, 4, 1, 2, 3), &  &~(1, 2, \dots, n-4, n, n-3, n-1, n-2),\\
			&~\hskip 5em \vdots                    &  &~\hskip 5em \vdots  \\       
			&~(n,n-1,\dots,k,1,\dots,k-1),  & &~(1,2,\dots,k,n,k+1,n-1,n-2\dots,k+2),\\
			&~\hskip 5em \vdots                     &  &~\hskip 5em \vdots  \\    
			&~(n, n-1,1, 2, \dots, n-2),   &  &~(1,n, 2, n-1, n-2, \dots, 3),  \\
			&~(n, 1, 2, \dots, n-1),       &  &~(n, 1, n-1, n-2, \dots, 2),  \\
			&~(1, 2, \dots, n),            &  &~(1, n, n-1, n-2, \dots, 2)    \\
		\end{aligned}
		\right\}.
	\end{align}
	The permutations in $\mathcal{P}$ form a tour in the cluster graph when read first from top to bottom of the first column, and then from top to bottom of the second column. For example, when $n=6$ we have
	\begin{align}
		\mathcal{P} = 
		\left\{  
		\begin{aligned}
			&~(6, 5, 4, 3, 2, 1), &   &~(1, 2, 3, 4, 6 ,5), \\
			&~(6, 5, 4, 3, 1, 2), &   &~(1, 2, 3, 6, 4, 5), \\
			&~(6, 5, 4, 1, 2, 3), &   &~(1, 2, 6, 3, 5, 4),\\
			&~(6, 5, 1, 2, 3, 4), &   &~(1, 6, 2, 5, 4, 3),  \\
			&~(6, 1, 2, 3, 4, 5), &   &~(6, 1, 5, 4, 3, 2),  \\
			&~(1, 2, 3, 4, 5, 6), &   &~(1, 6, 5, 4, 3, 2)    \\
		\end{aligned}
		\right\}.
	\end{align}
	We will use  $\mathcal{P}$ to glue together the ends of a \universal{}word for $S_n \setminus \mathcal{P}$, creating a \universal{}cycle for $S_n$. To accommodate shortening, we also need to consider twins of permutations in  $\mathcal{P}$. The only permutations in $\mathcal{P}$ that have twins are
	$(n, 1, 2, \dots, n-1)$ and $(1, n, n-1, n-2, \dots, 2)$.
	Let
	\begin{align}\label{eq:pstar}
		\mathcal{P}^\star = \mathcal{P} \cup \{(n-1,1,2,3,\ldots,n-2,n),   (2,n,n-1,\ldots,4,3,1)\}.
	\end{align}
	Hence $\mathcal{P}^\star$ is obtained from $\mathcal{P}$ by adding twin permutations to those already in $\mathcal{P}$.
	Blue dashed edges in Figure~\ref{fig:clusters} correspond to $\mathcal{P}^\star$.

	\begin{remark}\label{cl:213}
		By inspection of cases, for $n \geq 4$, the permutations in $\mathcal{P}^\star$ do not contain a $3$-window that is order-isomorphic to $213$ or $231$.
	\end{remark}

	If $TW$ is a set of twins in a cluster graph $G$, the \emph{compressed cluster graph} for $TW$ is obtained from $G$ by replacing each pair of parallel edges in $TW$,
	labeled by $L$ as $(x_1x_2\cdots x_{n-1}x_{n})$ and $(x_nx_2\cdots x_{n-1}x_{1})$,
	by one edge $e$ with $L(e) =\reduce(x_1x_2\cdots x_{n-1}x_{1})$. See Figure~\ref{fig:shortening} for an illustration where $TW$ forms a cycle.
	
	\begin{figure}
		\begin{center}
			\begin{tikzpicture}[scale=0.4]
				\tikzstyle{every node}=[inner sep=0pt, outer sep=0pt]

				\begin{scope}[xshift=-4cm,yshift=6cm]
					\draw (0,-2.5) node[pair](2312){}; 
					
					\draw (0,0)  node(2314){2314};
					\draw (0,-1)  node(3421){3421};
					\draw (0,-2)  node(2413){2413};
					\draw (0,-3)  node(3412){3412};
					\draw (0,-1.5) node[block](231){};
					\draw (-2,-1.5) node{\it 231};
				\end{scope}

				\begin{scope}[xshift=4cm,yshift=6cm]
					\draw (0,-0.5) node[pair](3123){}; 
					\draw (0,0)  node(3124){3124};
					\draw (0,-1)  node(4123){4123};
					\draw (0,-2)  node(4132){4132};
					\draw (0,-3)  node(4231){4231};
					\draw (0,-1.5) node[block](312){};
					\draw (2,-1.5) node{\it 312};
				\end{scope}
				
				\begin{scope}[xshift=0cm,yshift=12cm]
					\draw (0,-2.5) node[pair](1231){}; 
					\draw (0,0)  node(1234){1234};
					\draw (0,-1)  node(1243){1243};
					\draw (0,-2)  node(2341){2341};
					\draw (0,-3)  node(1342){1342};
					\draw (0,-1.5) node[block](123){};
					\draw (3,-1.5) node{\it 123};
				\end{scope}

				\draw[-latex](3124) to[out=0,in=-30] (123);
				\draw[-latex](4123) to[out=0,in=-10,looseness=1.4] (123);
				\draw[-latex](2341) to[out=180,in=140,looseness=1.5](231);
				\draw[-latex](1342) to[out=180,in=130,looseness=1.5](231);
				\draw[-latex](2413) to[out=0,in=175] (312);
				\draw[-latex](3412) to[out=0,in=190] (312);

				\begin{scope}[xshift=1cm]
					
					\begin{scope}[xshift=10cm,yshift=6cm]
						\draw (0,-2.5) node[pair](2312){2312}; 
						\draw (0,0)  node(2314){2314};
						\draw (0,-1)  node(3421){3421};
						\draw (0,-1.5) node[block](231){};
						\draw (-2,-1.5) node{\it 231};
					\end{scope}

					\begin{scope}[xshift=18cm,yshift=6cm]
						\draw (0,-0.5) node[pair](3123){3123}; 
						\draw (0,-2)  node(4132){4132};
						\draw (0,-3)  node(4231){4231};
						\draw (0,-1.5) node[block](312){};
						\draw (2,-1.5) node{\it 312};
					\end{scope}
					
					\begin{scope}[xshift=12cm,yshift=12cm]
						\draw (0,-2.5) node[pair](1231){1231}; 
						\draw (0,0)  node(1234){1234};
						\draw (0,-1)  node(1243){1243};
						\draw (0,-1.5) node[block](123){};
						\draw (3,-1.5) node{\it 123};
					\end{scope}

					\draw[-latex](3123) to[out=20,in=-10,looseness=1.5] (123);
					\draw[-latex](1231) to[out=180,in=140,looseness=1.5](231);
					\draw[-latex](2312) to[out=0,in=175] (312);
				\end{scope}
			\end{tikzpicture}
		\end{center}
		\caption{The effect of compressing one cycle in the cluster graph for $n = 4$. See     Figure~\ref{fig:clusters} for the entire cluster graph.}
		\label{fig:shortening}
	\end{figure}
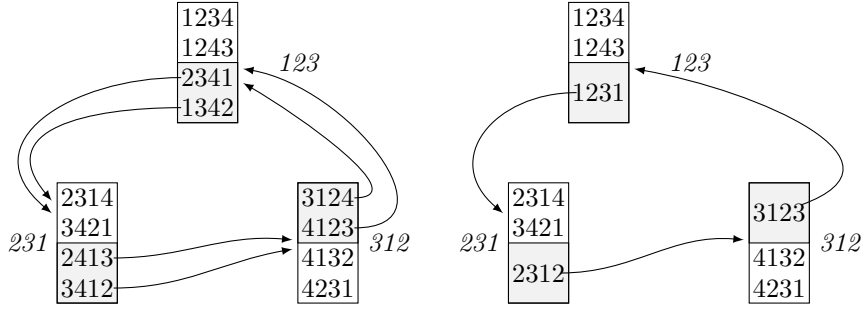
	
	\begin{lemma}\label{lem:connected} 
		For $n \geq 4$, the graph obtained from a compressed cluster graph by removing all edges corresponding to $\mathcal{P}^\star$ is strongly connected.
	\end{lemma}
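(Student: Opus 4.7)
The plan is to use Remark~\ref{cl:213} to identify edges that stay in $G':=G^c-\mathcal{P}^\star$ (where $G^c$ is the compressed cluster graph), and then build bypasses for the removed edges. Call an $(n-1)$-permutation (cluster) \emph{safe} if it contains $213$ or $231$ as a 3-consecutive window, and \emph{unsafe} otherwise. If an edge of $G^c$ has either source or destination a safe cluster, then its label inherits a $213$ or $231$ 3-window (from the first $n-1$ or last $n-1$ positions), and hence lies outside $\mathcal{P}^\star$ by Remark~\ref{cl:213}. Consequently every edge of $G^c$ incident to a safe cluster is preserved in $G'$, and both endpoints of any edge of $\mathcal{P}^\star$ are unsafe.

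Since $G^c$ is strongly connected, it suffices to show that for every edge $(X,Y)\in\mathcal{P}^\star$ there is a directed walk from $X$ to $Y$ in $G'$; then any walk in $G^c$ can be repaired into one in $G'$ by replacing each $\mathcal{P}^\star$-edge with its bypass. I would construct each bypass in three legs: an out-edge from $X$ to a safe cluster $S$, a walk from $S$ to a safe cluster $T$ that has an edge to $Y$, and the edge $T\to Y$. The first and third legs lie in $G'$ automatically because $S$ and $T$ are safe. For the middle leg, I would show that the safe clusters form a strongly connected subgraph of $G'$; since all edges between safe clusters are preserved, this reduces to a direct connectivity argument within $G^c$ restricted to safe clusters, which is routine given the abundance of safe $(n-1)$-patterns for $n\geq 4$.

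The main obstacle is to verify that every unsafe cluster has both a safe out-neighbour and a safe in-neighbour in $G^c$. Unsafe $(n-1)$-permutations have a rigid structure (every 3-consecutive window lies in $\{123,132,312,321\}$), so the unsafe clusters fall into a small family of types, amenable to case analysis. For the hardest case, the decreasing cluster $(n-1,\dots,1)$, the compressed twin edge formed by $(n,n-2,\dots,1,n-1)$ and $(n-1,n-2,\dots,1,n)$ heads to the safe cluster $(n-2,\dots,1,n-1)$ (whose last three entries $(2,1,n-1)$ form a $213$ pattern), while the $n$-permutation $(n-1,n,n-2,\dots,1)$ is an in-edge from the safe cluster $(n-2,n-1,n-3,\dots,1)$ (whose first three entries form a $231$ pattern). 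The increasing cluster $(1,2,\dots,n-1)$ is handled symmetrically, and the remaining unsafe clusters on the $\mathcal{P}^\star$ cycle admit analogous short constructions.
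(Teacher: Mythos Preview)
Your outline is logically coherent, but the step you label ``routine'' --- that the safe clusters form a strongly connected subgraph --- is actually the heart of the argument, and you give no construction for it. To walk from one safe cluster $S$ to another safe cluster $T$ using only safe intermediate clusters, you still need to produce a word whose every $(n-1)$-window contains a $213$ or $231$; there is no soft reason this holds, and the natural construction is precisely what the paper does. The paper inserts a single letter $x$ between $a$ and a shifted copy $b'$ of $b$, with $x$ chosen so that both $\reduce(a_{n-2}a_{n-1}x)$ and $\reduce(xb'_1b'_2)$ lie in $\{213,231\}$; then every $n$-window of $Q=axb'$ contains $x$ and hence one of these two triples, so by Remark~\ref{cl:213} no window is in $\mathcal{P}^\star$. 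This single word handles \emph{all} pairs $(a,b)$ at once, making your three-leg bypass and the separate treatment of unsafe endpoints unnecessary.

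Your case analysis in the last paragraph is also incomplete: you verify safe in- and out-neighbours only for the increasing and decreasing clusters, and wave at the rest as ``analogous''. For general $n$ the $\mathcal{P}^\star$ tour visits roughly $2n$ clusters, each unsafe, and each would need its own explicit neighbour; this is doable but tedious, and again the paper's single-letter insertion subsumes it. In short, your safe/unsafe decomposition is a correct organizing idea, but once you try to fill the ``routine'' gap you will end up writing the paper's construction anyway --- so you might as well give that construction directly and skip the scaffolding.
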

	\begin{proof}
		Notice that a compressed cluster graph is obtained from the (uncompressed) cluster graph $G$ by replacing parallel edges with single edges. Hence it is enough to show that $G-\mathcal{P}^\star$ is strongly connected. We prove there exists a walk between any two vertices in $G-\mathcal{P}^\star$.
		
		Let $a=a_1a_2\cdots a_{n-1}$ and $b=b_1b_2\cdots b_{n-1}$ be permutations in $S_{n-1}$ corresponding to two vertices in $G-\mathcal{P}^\star$. 
		We construct a word $Q$ that corresponds to a walk in $G-\mathcal{P}^\star$ starting at $a$ and ending at $b$. 
		We define $Q=axb'$, where $x$ is an integer and $b'$ is created from $b$ as follows:
		\[
		x = \begin{cases} 
			n  & \text{ if } a_{n-2} > a_{n-1},\\
			0  & \text{ if } a_{n-2} < a_{n-1},
		\end{cases}
		\hskip 4em
		b_i' = \begin{cases} 
			b_i + n  & \text{ if } b_i > (b_1+b_2)/2,\\
			b_i - n & \text{ otherwise.}
		\end{cases}
		\]
		See Figure~\ref{fig:transition} for sketch of the construction of a general $Q$ and for a particular example see Figure~\ref{fig:transitionsexample}.
		Notice that $\reduce(b') = b$ and $\{\reduce(a_{n-2}a_{n-1}x), \reduce(xb_{1}b_2)\} \subseteq \{231,213\}$. 
		Hence any $n$-permutation in $Q$ contains $x$ and contains a consecutive triple that reduces to $213$ or $231$. 
		Neither of these are contained in any permutation in $\mathcal{P}^\star$ by Claim~\ref{cl:213}. 
		Therefore, $Q$ corresponds to a path in $G-\mathcal{P}^\star$ from $a$ to $b$, and hence $G-\mathcal{P}^\star$ is strongly connected. 
	\end{proof}
	
	\tikzset{
		pvtx/.style={inner sep=1.7pt, outer sep=0pt, circle, fill=black,draw=black}, 
	}
	\begin{figure}
		\begin{center}
			\begin{tikzpicture}[scale=0.5]
				\draw 
				(0,0) node[pvtx,label=left:$a_{n-1}$]{}
				(-1,1) node[pvtx,label=left:$a_{n-2}$]{}
				(-3,-\bshift) rectangle (\bshift,2+\bshift)
				(1,3) node[pvtx,fill=blue,label=below:$x$]{}
				(2,5) node[pvtx,label=above:$b_1$]{}
				(3,-2) node[pvtx,label=below:$b_2$]{}
				(2-\bshift,-2+\bshift) rectangle (5+\bshift,-4-\bshift)
				(2-\bshift,4-\bshift) rectangle (5+\bshift,6+\bshift)
				(-2,-1) node{$a$}
				(4,3) node{$b'$}
				(4,-1) node{$b'$}
				
				(-5,6) node{$2n-1$}
				(-5,5) node{$\vdots$}
				(-5,4) node{$n+1$}
				(-5,3) node{$n$}
				(-5,2) node{$n-1$}
				(-5,1) node{$\vdots$}
				(-5,0) node{1}
				(-5,-1) node{$0$}
				(-5,-2) node{$-1$}
				(-5,-3) node{$\vdots$}
				(-5,-4) node{$-n+1$}
				
				(1,-6) node{(i)}
				;
				
				\begin{scope}[xshift=12cm]
					\draw
					(0,1) node[pvtx,label=left:$a_{n-1}$]{}
					(-1,0) node[pvtx,label=left:$a_{n-2}$]{}
					(-3,-\bshift) rectangle (\bshift,2+\bshift)
					(1,-1) node[pvtx,fill=blue,label=below:$x$]{}
					(2,-2) node[pvtx,label=below:$b_1$]{}
					(3, 5) node[pvtx,label=below:$b_2$]{}
					(2-\bshift,-2+\bshift) rectangle (5+\bshift,-4-\bshift)
					(2-\bshift,4-\bshift) rectangle (5+\bshift,6+\bshift)
					(-2,-1) node{$a$}
					(4,3) node{$b'$}
					(4,-1) node{$b'$}
					
					(1,-6) node{(ii)}
					;
				\end{scope}
			\end{tikzpicture}
		\end{center}
		\caption{Transitioning between two clusters in Lemma~\ref{lem:connected}.
			(i) depicts $a_{n-1} < a_{n-2}$ and $b_1 > b_2$
			and (ii) depicts $a_{n-1} > a_{n-2}$ and $b_1 < b_2$.
			The remaining two cases are similar.}
		\label{fig:transition}
	\end{figure}
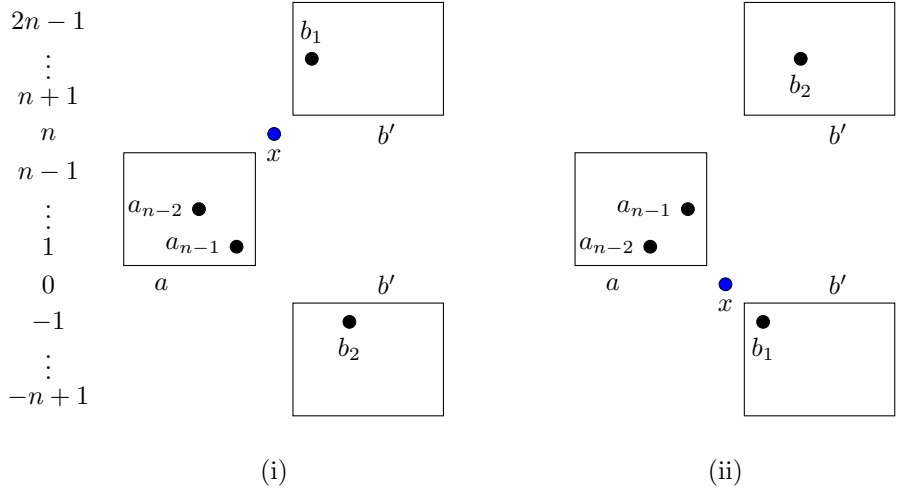

	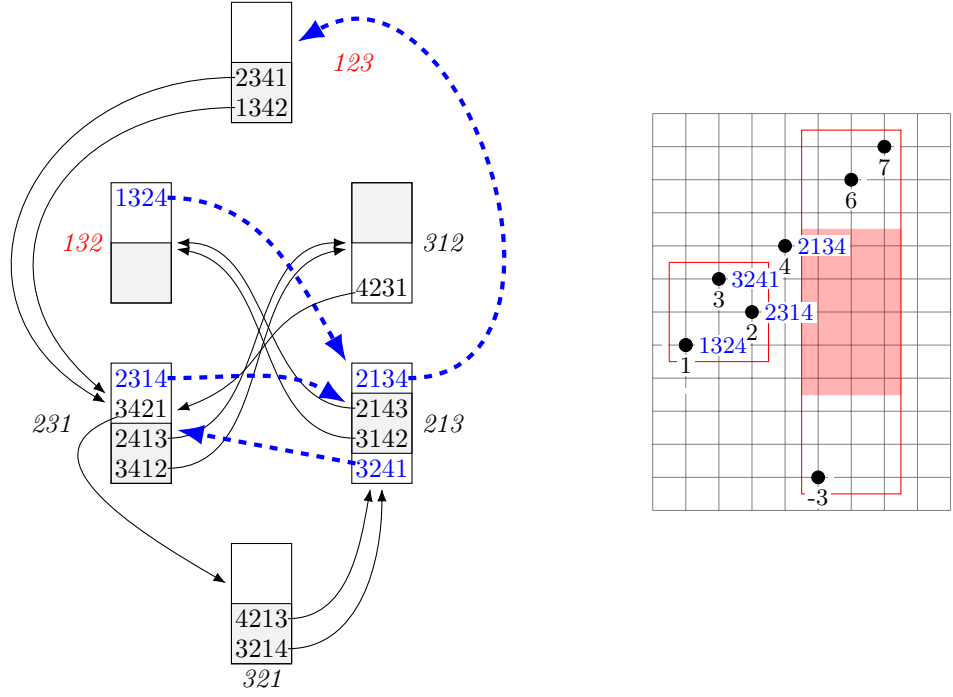
\begin{figure}[h]
		\begin{center}
			\begin{tikzpicture}[scale=0.4]
				\tikzstyle{every node}=[inner sep=0pt, outer sep=0pt]
				\draw (0,-2.5) node[pair](3213){}; 
				\draw (0,-2)  node(4213){4213};
				\draw (0,-3)  node(3214){3214};
				\draw (0,-1.5) node[block](321){};
				\draw (0,-4) node{\it 321};
				
				\begin{scope}[xshift=4cm,yshift=6cm]
					\draw (0,-1.5) node[pair](2132){}; 
					\draw[blue] (0,0)  node(2134){2134};
					\draw (0,-1)  node(2143){2143};
					\draw (0,-2)  node(3142){3142};
					\draw[blue] (0,-3)  node(3241){3241};
					\draw (0,-1.5) node[block](213){};
					\draw (2,-1.5) node{\it 213};
				\end{scope}

				\begin{scope}[xshift=-4cm,yshift=6cm]
					\draw (0,-2.5) node[pair](2312){}; 
					
					\draw[blue] (0,0)  node(2314){2314};
					\draw (0,-1)  node(3421){3421};
					\draw (0,-2)  node(2413){2413};
					\draw (0,-3)  node(3412){3412};
					\draw (0,-1.5) node[block](231){};
					\draw (-3,-1.5) node{\it 231};
				\end{scope}
				
				\begin{scope}[xshift=-4cm,yshift=12cm]
					\draw (0,-2.5) node[pair](1321){}; 
					
					\draw[blue] (0,0)  node(1324){1324};
					\draw (0,-1.5) node[block](132){};
					\draw[red] (-2,-1.5) node{\it 132};
				\end{scope}
				
				\begin{scope}[xshift=4cm,yshift=12cm]
					\draw (0,-0.5) node[pair](3123){}; 
					\draw (0,-3)  node(4231){4231};
					\draw (0,-1.5) node[block](312){};
					\draw (2,-1.5) node{\it 312};
				\end{scope}

				\begin{scope}[xshift=0cm,yshift=18cm]
					\draw (0,-2.5) node[pair](1231){}; 
					\draw (0,-2)  node(2341){2341};
					\draw (0,-3)  node(1342){1342};
					\draw (0,-1.5) node[block](123){};
					\draw[red] (3,-1.5) node{\it 123};
				\end{scope}

				\draw[-Latex](1342) to[out=180,in=140,looseness=1.5](231);
				\draw[-Latex](2341) to[out=180,in=150,looseness=1.5](231);
				
				\draw[-Latex](2413) to[out=0,in=180] (312);
				\draw[-Latex](3412) to[out=0,in=190] (312);
				
				\draw[-Latex](3214) to[out=0,in=270] (213);
				\draw[-Latex](4213) to[out=0,in=260] (213);
				
				\draw[-Latex](3142) to[out=180,in=-10] (132);
				\draw[-Latex](2143) to[out=180,in=0] (132);

				\draw[-Latex,blue,dashed, ultra thick](3241) to (231);
				\draw[-Latex](3421) to[out=200,in=150,looseness=1.4] (321);
				\draw[-Latex,blue,dashed,ultra thick](2314) to[out=0,in=150] (213);
				\draw[-Latex,blue,dashed,ultra thick](2134) to[out=0,in=30,looseness=1.4] (123);
				\draw[-Latex,blue,dashed,ultra thick](1324) to[out=0,in=120,looseness=1] (213);
				\draw[-Latex](4231) to[out=190,in=20,looseness=1.3] (231);

				\begin{scope}[xshift=13cm, yshift=6cm]
					\begin{scope}[scale=1.1]
						\draw[help lines](0,-4) grid (9,8);
						\draw[red] (0.5, 0.5) rectangle(3.5,3.5);
						\draw[red] (4.5,-3.5) rectangle(7.5,7.5);
						\draw[red,fill=red,opacity=0.3] (4.5,-0.5) rectangle(7.5,4.5);
						\foreach \x/\y/\z in {1/1/1324,2/3/3241,3/2/2314,4/4/2134,5/-3/,6/6/,7/7/ ,}{
							\draw(\x,\y) node[pvtx,label=right:{ }]{};
							\draw(\x,\y-0.3) node[below,fill=white,inner sep=1pt]{\small \y};
							\draw(\x+0.3,\y) node[right,fill=white,inner sep=1pt]{\color{blue}\small \z};
						}
					\end{scope}
				\end{scope}
			\end{tikzpicture}
		\end{center}
		\caption{Cluster graph for $4$-permutations with $\mathcal{P}^\star$ deleted. This figure demonstrates using the procedure in Lemma~\ref{lem:connected} to obtain a walk from $a=132$ to $b=123$, depicted using dashed edges. Here $n=4$, $x=4$, $b' = (-3,6,7)$, and $Q = (1,3,2,4,-3,6,7)$. Reducing the $4$-windows of $Q$ yields the walk $(1324, 3241, 2314, 2134)$. On the right is a picture of $Q$ with red rectangles bounding regions for $a$ and $b'$. The filled red rectangle indicates a region not containing any letter from $b'$ regardless of the letters in $b$.}
		\label{fig:transitionsexample}
	\end{figure}

	For a word $w$, an \emph{$n$-window} of $w$ is a substring of $n$ consecutive letters of $w$. A \emph{trail} in a directed multigraph is a walk without repeated edges.
	
	\begin{lemma}\label{lem:trail}
		Let $T = (e_1,e_2,\ldots,e_\ell)$ be a trail in a compressed cluster graph for $n$-permutations.
		There exists a word $w=w_1\cdots w_{\ell+n-1}$, where
		$\reduce(w_i\cdots w_{i+n-1}) = L(e_i)$ for all $1 \leq i \leq \ell$.
	\end{lemma}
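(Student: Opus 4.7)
The plan is to construct $w$ greedily, one letter at a time, by induction on the trail length $\ell$. I would maintain the invariant that after processing the first $i$ edges the prefix $w_1\cdots w_{n+i-1}$ satisfies $\reduce(w_j\cdots w_{j+n-1}) = L(e_j)$ for all $1 \le j \le i$. For the base case $\ell = 1$, I would just take $w_1\cdots w_n := L(e_1)$, which is already in reduced form, so the single window condition is immediate (both when $e_1$ is an ordinary edge and when it is a compressed edge, in which case $L(e_1)$ has its repeat at positions $1$ and $n$).

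For the inductive step, suppose $w_1\cdots w_{n+i-1}$ has been constructed, and write $\tau = L(e_{i+1})$. Since $T$ is a trail, the head of $e_i$ equals the tail of $e_{i+1}$, which together with the inductive hypothesis $\reduce(w_i\cdots w_{n+i-1}) = L(e_i)$ forces $\reduce(w_{i+1}\cdots w_{n+i-1}) = \reduce(\tau_1\cdots\tau_{n-1})$. Thus the last $n-1$ letters already encode the first $n-1$ entries of $\tau$ up to order-isomorphism, and it remains only to choose $w_{n+i}$ at the correct rank within the new $n$-window.

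If $e_{i+1}$ is an uncompressed edge, then $\tau$ is a permutation of $[n]$ and I would let $k := \tau_n$ and pick $w_{n+i}$ as a rational number strictly less than $\min\{w_{i+1},\ldots,w_{n+i-1}\}$ when $k = 1$, strictly greater than $\max\{w_{i+1},\ldots,w_{n+i-1}\}$ when $k = n$, or strictly between the $(k-1)$-st and $k$-th smallest of those values otherwise. A direct rank-shift computation (entries smaller than $w_{n+i}$ keep their ranks, entries larger shift up by one) then yields $\reduce(w_{i+1}\cdots w_{n+i}) = \tau$. If $e_{i+1}$ is a compressed edge, then $\tau_1 = \tau_n$ and $\tau_1,\ldots,\tau_{n-1}$ is a permutation of $[n-1]$, so I would set $w_{n+i} := w_{i+1}$. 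Because $w_{i+1},\ldots,w_{n+i-1}$ are pairwise distinct, the new window has a single repeated value, occurring at positions $1$ and $n$, matching exactly the repeat in $\tau$, while the ranks of the other entries are untouched; hence $\reduce(w_{i+1}\cdots w_{n+i}) = \tau$.

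After this construction I would have a word over $\mathbb{Q}$ with the required window property; applying $\reduce$ once more to the entire word yields a word over $\N$ with the same window property, because $\reduce$ preserves the relative order of every subsequence. The main point requiring care is the rank-shift verification in the uncompressed case, which unpacks cleanly using the fact that $\reduce(\tau_1\cdots\tau_{n-1})$ relates to $\tau$ precisely by decrementing those $\tau_j$ that exceed $\tau_n$, so inserting a $w_{n+i}$ of rank $\tau_n$ restores exactly the ranks prescribed by $\tau$; the compressed case is then a minor variant, with duplicate values handled by the convention that $\reduce$ sends equal entries to equal ranks.
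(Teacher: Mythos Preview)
Your proposal is correct and follows essentially the same inductive scheme as the paper: induct on $\ell$, take $w=L(e_1)$ in the base case, and in the inductive step append the letter $w_{i+1}$ when $e_{i+1}$ is compressed, or a value of the correct rank in the last window when it is not. The only cosmetic difference is that the paper keeps the word over $\mathbb{N}$ throughout by incrementing all existing letters $\ge x$ before appending $x$, whereas you insert a rational in the appropriate gap and apply $\reduce$ once at the end; these are equivalent bookkeeping choices.
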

	\begin{proof}
		If $\ell=1$, we let $w=L(e_1)$.
		If $\ell \ge 2$, let $w'=w'_1\cdots w'_{\ell+n-2}$ be a word for $(e_1,e_2,\ldots,e_{\ell-1})$ by induction.
		Let $a_1a_2\cdots a_n$ be $L(e_\ell)$.
		If $a_n = a_1$, i.e. $e_\ell$ is a compressed edge, then 
		$w$ is obtained from $w'$ by appending $w'_{\ell}$, which is the letter corresponding to $a_1$.
		Otherwise, $a_1 \ne a_n$, and we may need to modify $w'$ and determine a letter to append as follows.
		
		If $a_n = n$, let $x = \max\{w'_i:1 \le i \le \ell+n-2\}+1$.
		If $a_n < n$, let $i$ be the index such that $a_i=a_n+1$ and
		$x = w'_{\ell-1+i}$ be the letter in $w'$ corresponding to $a_i$.
		Then we define $w=w_1\cdots w_{\ell+n-1}$ as
		\[
		w_i = \begin{cases}
			w'_i & \text{ if }  w'_i < x \text{ and } i \leq \ell+n-2, \\
			w'_i+1 & \text{ if }  w'_i \geq x \text{ and } i \leq \ell+n-2,  \\
			x      & \text{ if } i = \ell+n-1.
		\end{cases}
		\]
		Since all $n$-windows of  $w_1 \cdots w_{\ell+n-2}$ are order isomorphic to $n$-windows of $w'_1 \cdots w'_{\ell+n-2}$, $w$ still represents the same permutations as $w'$ in the part before the last letter. 
		The choice of $x$ makes the last $n$-window in $w$ be order isomorphic to $L(e_\ell)$. 
	\end{proof}
	
	The next lemma describes how to use $\mathcal{P}$ to turn a particular word $w$ into a cyclic word $z$. It adds $n+1$ letters at the beginning of $w$.
	The result is a cyclic word covering the permutations in $w$ and also all permutations in $\mathcal{P}$. 
	A result of the operation, where $w$ is indicated by a box, can be seen in the Appendix for $n=4$.

	\begin{lemma}\label{lem:glue}
		Let $n \geq 4$.
		Let $\mathcal{P}'$ be such that $\mathcal{P} \subseteq  \mathcal{P}' \subseteq \mathcal{P}^\star$. Let $w=w_1w_2\cdots w_k$ be a word with $k\geq n-1$ that covers each $n$-permutation in $S_n$ at most once, and let $\mathcal{W}$ be the set of $n$-permutations covered by $w$. If $\reduce(w_1w_2\cdots w_{n-1}) = \reduce(w_{k-n+2}\cdots w_{k-1}w_k) = (n-1,\ldots,2,1)$ and $\mathcal{W}\cap\mathcal{P}' = \emptyset$, 
		then there exists a cyclic word $z$ of length $k+n+1$ such that each permutation in $\mathcal{W} \cup \mathcal{P}'$ is covered by exactly one $n$-window of $z$. 
		The permutations in $\mathcal{P}'\setminus \mathcal{P}$ are covered in a compressed way.
	\end{lemma}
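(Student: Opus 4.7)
The plan is to construct the prefix $p = p_1 p_2 \cdots p_{n+1}$ explicitly and take $z = p_1 p_2 \cdots p_{n+1} w_1 \cdots w_k$ as the cyclic word. The structural reason such a $p$ exists is that the $2n$ permutations of $\mathcal{P}$, read in the order of \eqref{eq:G}, form a closed trail in the cluster graph beginning and ending at the decreasing cluster $(n-1, n-2, \ldots, 1)$, and by hypothesis both the first and last $(n-1)$-windows of $w$ also reduce to this cluster. So we can splice $\mathcal{P}$ into the cyclic word at the seam where the end of $w$ meets its beginning; the prefix $p$ contributes exactly $2n$ new $n$-windows, matching $|\mathcal{P}|$.

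I assign these $2n$ new windows to the elements of $\mathcal{P}$ in trail order. For $j = 0, \ldots, n-2$, the wrap-around window $(w_{k-n+2+j}, \ldots, w_k, p_1, \ldots, p_{j+1})$ is assigned to $\mathcal{P}_{j+1}$; the windows $(p_1, \ldots, p_n)$ and $(p_2, \ldots, p_{n+1})$ are assigned to $\mathcal{P}_n = (1, 2, \ldots, n)$ and $\mathcal{P}_{n+1}$; and the $n-1$ windows crossing from $p$ into $w$ are assigned to $\mathcal{P}_{n+2}, \ldots, \mathcal{P}_{2n}$. Reading off these order-isomorphisms yields the chain $p_1 < p_2 < \cdots < p_{n-1} < \min(w_k, w_{n-1})$, $p_n > w_1$, and $p_{n-1} < p_{n+1} < w_{n-1}$. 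I realize these by first relabeling the letters of $w$ via a strictly increasing map on values (this preserves $\mathcal{W}$ and all reductions within $w$) so that enough integer slots become available, then assigning $p_1, \ldots, p_{n-1}$ to $n-1$ slots below $\min(w_k, w_{n-1})$, $p_n$ to a slot above $w_1$, and $p_{n+1}$ to a slot strictly between $p_{n-1}$ and $w_{n-1}$.

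To accommodate the twin permutations in $\mathcal{P}' \setminus \mathcal{P}$ I perform targeted substitutions. If $(n-1, 1, 2, \ldots, n-2, n) \in \mathcal{P}'$ I set $p_{n-1} := w_k$, so that the window $(w_k, p_1, \ldots, p_{n-1})$ reduces to the compressed label $(n-1, 1, 2, \ldots, n-2, n-1)$ that covers both $\mathcal{P}_{n-1}$ and its twin; if $(2, n, n-1, \ldots, 3, 1) \in \mathcal{P}'$ I analogously set $p_{n+1} := w_{n-1}$. I then verify by direct inspection of each of the $2n$ windows that its reduction matches the prescribed (compressed or uncompressed) permutation, and that the substitutions do not introduce unintended equalities or order violations in neighbouring windows. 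Combined with $\mathcal{W} \cap \mathcal{P}' = \emptyset$ and the fact that internal windows of $w$ are preserved under relabeling, this shows $z$ covers $\mathcal{W} \cup \mathcal{P}'$ with each permutation hit exactly once and with the required compressions.

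The main obstacle is that the $2n$ new windows overlap pairwise in $n-1$ letters, so the order constraints are tightly coupled and must be checked end-to-end; a single misplaced inequality or unintended tie in an adjacent window invalidates the whole chain. In particular, the substitution $p_{n-1} := w_k$ forces $w_k < w_{n-1}$ via the chain $p_{n-1} < p_{n+1} < w_{n-1}$, which one must verify is consistent with the given hypotheses (or, when needed, arranged by the larger argument that produces $w$).
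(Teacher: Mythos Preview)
Your construction is essentially the paper's: prepend $n+1$ letters to $w$, chosen so that the $2n$ new $n$-windows realize the closed trail $\mathcal{P}$, and tweak a boundary letter for each twin in $\mathcal{P}'\setminus\mathcal{P}$. The paper writes down the prefix explicitly as $z_1,\dots,z_{n-1}$ below $\min w$, $z_n$ above $\max w$, and $z_{n+1}$ just above $z_{n-1}$, which is exactly your chain $p_1<\dots<p_{n-1}<\min(w_k,w_{n-1})$, $p_n>\max w$, $p_{n-1}<p_{n+1}<w_{n-1}$ without the relabeling detour.

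The one substantive divergence is your handling of the twin $(n-1,1,2,\ldots,n-2,n)$, and here there is a genuine gap. You set $p_{n-1}:=w_k$; the paper instead replaces the copy of $w_k$ by $z_{n-1}$. Your choice forces $w_k=p_{n-1}<p_{n+1}<w_{n-1}$, hence $w_k<w_{n-1}$, and you correctly flag that this is \emph{not} derivable from the stated hypotheses (for $k\ge 2n-2$ there is no $n$-window of $w$ relating $w_{n-1}$ and $w_k$, and your order-preserving relabeling of $w$ cannot create the needed inequality). So as a proof of the lemma as stated, your argument is incomplete. The paper's choice avoids this: lowering $w_k$ to $z_{n-1}$ only disturbs the window $(w_{k-n+1},\ldots,w_k)$, and one can show $w_k<w_{k-n+1}$ directly from $\mathcal{W}\cap\mathcal{P}'=\emptyset$ (otherwise that window would reduce to $(1,n,n-1,\ldots,2)\in\mathcal{P}$ or to a compressed label covering it). Swapping your substitution for this one---i.e.\ lowering $w_k$ to $p_{n-1}$ rather than raising $p_{n-1}$ to $w_k$---closes the gap with no other changes to your outline.
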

	\begin{proof}
		The cyclic word $z=z_1z_2\cdots z_{n+1+k}$ can be defined as follows; see Figure~\ref{fig:glueZ} for guidance:
		\begin{align*}
			z_i &= \min\{w_1,w_2,\ldots,w_k\} - n-1 + i \text{ for } 1 \leq i \leq n-1, \\
			z_n &= \max\{w_1,w_2,\ldots,w_k\} + 1, \\
			z_{n+1} &= \begin{cases}
				z_{n-1}+1  & \text{ if }  (2,n,n-1,\ldots,4,3,1) \notin \mathcal{P}',\\
				w_{n-1}    & \text{ otherwise,}
			\end{cases}\\
			z_{n+1+i} &= w_i \text{ for } 1 \leq i \leq k-1, \\
			z_{n+1+k} &= \begin{cases}
				w_k & \text{ if }  (n-1,1,2,3,\ldots,n-2,n) \notin  \mathcal{P}',\\
				z_{n-1} & \text{ otherwise.} 
			\end{cases}
		\end{align*}
		The $n$-windows of the cyclic word $z$ that start at indices $k+3$ through $n+1$ (the first $n-1$ of which wrap around to the beginning of $z$) cover each permutation in $\mathcal{P'}$ exactly once, in the order given by reading the left column in \eqref{eq:G} first and the right column second, and with the permutations in $\mathcal{P}'\setminus \mathcal{P}$ covered in a compressed way.
		Notice that the possible shortenings are obtained by $n$-windows starting at $z_{n+1}$ and $z_{n+1+k}$.
		
		Since $z_{n+2}\cdots z_{n+k} = w_1\cdots w_{k-1}$, all permutations in $\mathcal{W}$ except the last one containing $w_k$ are covered by $z$. In the case that $z_{n+1+k} = w_k$, the last one is there as well.
		We need to check that $\reduce(w_{k-n+1}\cdots w_k) = \reduce(z_{k+2}\cdots z_{n+1+k})$ when $z_{n+1+k} = z_{n-1}$. By hypothesis, $w_{k-n+2} > \cdots > w_{k-1} > w_k$ so, if $w_k \ge w_{k-n+1}$, then $\reduce(w_{k-n+1}\cdots w_k) = (1,n,n-1,\ldots,2) \in \mathcal{P}$ or $\reduce(w_{k-n+1}\cdots w_k) = (1,n-1,n-2,\ldots,1)$, in both cases contradicting $\mathcal{P} \cap \mathcal{W} = \emptyset$. Therefore, $w_k < w_{k-n+1}$, so $w_k < \min\{w_{k-n+1},\ldots,w_{k-1}\}$, and decreasing $w_k$ to $z_{n+1+k}=z_{n-1}=\min\{w_1,w_2,\ldots,w_k\}-2$ does not change the relative order of $w_{k-n+1}\cdots w_k$.
	\end{proof}

	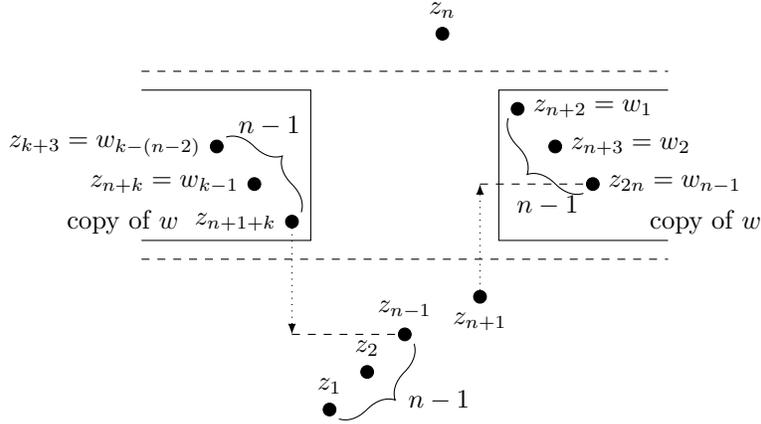
\begin{figure}[h]
		\begin{center}
			\begin{tikzpicture}[scale=0.5]
				\draw 
				(0,0) node[pvtx,label=left:$z_{n+1+k}$]{}
				(-1,1) node[pvtx,label=left:{$z_{n+k}=w_{k-1}$}]{}
				(-2,2) node[pvtx,label=left:{$z_{k+3}=w_{k-(n-2)}$}]{}
				(-4,-\bshift) -- (\bshift,-\bshift) -- (\bshift,3+\bshift)--(-4,3+\bshift)
				
				(10,-\bshift) -- (6-\bshift,-\bshift) -- (6-\bshift,3+\bshift) -- (10,3+\bshift)
				
				(-4.5,0) node{copy of $w$}
				(11,0) node{copy of $w$}
				
				(1,-5) node[pvtx,label=above:$z_1$]{}
				(2,-4) node[pvtx,label=above:$z_2$]{}
				(3,-3) node[pvtx,label=above:$z_{n-1}$]{}
				(4, 5) node[pvtx,label=above:$z_n$]{}
				(5,-2) node[pvtx,label=below:$z_{n+1}$]{}
				(6, 3) node[pvtx,label=right:{$z_{n+2}=w_1$}]{}
				(7, 2) node[pvtx,label=right:{$z_{n+3}=w_2$}]{}
				(8, 1) node[pvtx,label=right:{$z_{2n}=w_{n-1}$}]{}
				;
				
				\draw[dotted,-latex](0,0) -- (0,-3);
				\draw[dashed](0,-3) -- (3,-3);
				
				\draw[dotted,-latex](5,-2) -- (5,1);
				\draw[dashed](5,1) -- (8,1);
				
				\draw[dashed] (-4,-1) -- (10,-1);
				\draw[dashed] (-4,4) -- (10,4);

				\draw [decorate,decoration={brace,amplitude=10pt,mirror,raise=5pt},yshift=0pt]
				(1,-5) -- node [black,pos=0.5,xshift=0.3cm,label=below right:{$n-1$}] {}  (3,-3);
				
				\draw [decorate,decoration={brace,amplitude=10pt,raise=5pt},yshift=0pt]
				(-2,2) -- node [black,pos=0.5,yshift=0.4cm,xshift=0.2cm,label=above:{$n-1$}] {}  (0,0);
				
				\draw [decorate,decoration={brace,amplitude=10pt,mirror,raise=5pt},yshift=0pt]
				(6,3) -- node [black,pos=0.5,xshift=-0.1cm,yshift=-0.4cm,label=below:{$n-1$}] {}  (8,1);
				
			\end{tikzpicture}
		\end{center}
		\caption{The word $z$ from Lemma~\ref{lem:glue} for $n=4$ depicted.
			Dashed lines indicate relative order of entries.
			Arrows indicate the ``otherwise'' cases for $z_{n+1+k}$ and $z_{n+1}$.}
		\label{fig:glueZ}
	\end{figure}
	
	Now we are ready to prove Theorem~\ref{thm:conj8}, restated below.
	\MainThm*
	
	\begin{proof}[Proof of Theorem~\ref{thm:conj8}] 
		If $n=3$, then possible \universal{}cycles are for example 1232 and 145243 for $i=1$ and $i=0$, respectively.
		Let $n \geq 4$ and $0 \leq i \leq (n-2)!$.
		Let $G$ be the cluster graph for $n$-permutations.
		By Lemma~\ref{lem:kpv}, $G$ contains $(n-1)!/(n-1)=(n-2)!$ disjoint cycles formed by double edges.
		Let $C_i$ be a union of $i$ of these cycles.
		Let $\mathcal{P}'$ be the set satisfying $\mathcal{P}\subseteq\mathcal{P}'\subseteq \mathcal{P}^\star$, where each permutation in $\mathcal{P}^\star\setminus\mathcal{P}$ is included in $\mathcal{P}'$ if and only if it is a twin in $C_i$. 
		Let $G_i$ be the compressed cluster graph for the set of twins in $C_i$.
		Notice that $G_i$ has $n!-i(n-1)$ edges and is still Eulerian since cycles of double edges were replaced by cycles of single edges. 
		Let $T$ be a tour in $G_i$ corresponding to all of the permutations in $\mathcal{P}'$.
		Let $G_i'$ be obtained from $G_i$ by removing $T$.
		Then $G_i'$ is balanced because $G_i$ and $T$ are balanced, and $G_i'$ is strongly connected as Lemma~\ref{lem:connected} implies $G_i'$ has a strongly connected spanning subgraph, so $G_i'$ is Eulerian.
		Hence, using Lemma~\ref{lem:trail} with an Eulerian trail of $G_i'$ that starts and ends at $(n-1,n-2,\ldots,1)$, there exists a word $w=w_1w_2\cdots w_k$ whose $n$-windows are $L(e)$ for the edges $e$ of $G_i'$ in the order of the Eulerian trail, and which in particular has 
		$\reduce(w_1w_2\cdots w_{n-1}) = \reduce(w_{k-n+2}\cdots w_k) = (n-1,n-2,\ldots,1)$.
		By Lemma~\ref{lem:glue}, there exists a cyclic word $z$, which covers each $n$-permutation exactly once, and covers the twins in $C_i$ in a compressed way. This cyclic word $z$ is a shortened universal cycle for $S_n$ of length $n!-i(n-1)$.
	\end{proof}

	\section{Conclusion}
	In this note we proved a conjecture of Kitaev, Potapov, and Vajnovszki~\cite{KPV19} on shortening \universal{}words by adding repeated elements. Our construction does not control how many different entries are in the resulting \universal{}cycle. It would be interesting to investigate the smallest number of symbols needed to create a shortened \universal{}cycle of a given length. As suggested in \cite{GKSZ19}, it would also be interesting to determine a greedy algorithm for constructing shortened universal cycles for $S_n$.
	
	The case $i=0$ in Theorem \ref{thm:conj8} gives a new way to construct universal cycles for $S_n$. It is an open question whether every Eulerian tour of the cluster graph for $n$-permutations corresponds to a universal cycle for $S_n$; see \cite{CDG92,H90}. Our proof shows that the Eulerian tours containing the specific tour for $\mathcal{P}$ all correspond to universal cycles for $S_n$.
	
	\section*{Acknowledgments}
	Work on this project started during the Research Training Group (RTG) rotation at Iowa State University in the spring of 2021. Rachel Kirsch, Clare Sibley, and Elizabeth Sprangel were supported by NSF grant DMS-1839918.
	The authors thank Dylan Fillmore, Bennet Goeckner, Kirin Martin, and Daniel McGinnis, for helpful discussions during early stages of this project. We also thank the reviewers for many valuable suggestions, which improved the presentation of the result.
	
	\bibliographystyle{abbrvurl}
	\bibliography{GucyclesBibliography}
	
	\section*{Appendix}
	\subsection{Constructions}
	Here we describe Figures~\ref{fig:u-cycle},
	\ref{fig:u-cycle-3}, and
	\ref{fig:u-cycle-6}
	depicting constructions coming from Theorem~\ref{thm:conj8} for $n=4$ of three different lengths.
	Recall that the construction of $w$ in the proof of Theorem~\ref{thm:conj8} uses an Eulerian tour but does not explicitly say which one. Hence there are many possible outputs of the described algorithm. Here we present one for each length of the resulting \universal{}cycle.
	
	In addition to writing a sequence of numbers forming a \universal{}cycle, we depict the word in a grid, where each node corresponds to a letter. The letter is depicted below as a number. The number with four digits to the right of each node is a permutation of 1234 whose representation starts at the given letter.
	Since the word is cyclic, the first three nodes are repeated at the end as gray nodes to make it easier to see the cycle.
	Blue permutations are in $\mathcal{P}^\star$.
	Red sequences are compressed permutations, where the first and  last entries are the same.
	The black rectangle denotes the word $w$ as in Figure~\ref{fig:glueZ}.
	
	We made no attempt to minimize the number of letters used in the depicted \universal{}cycles.

	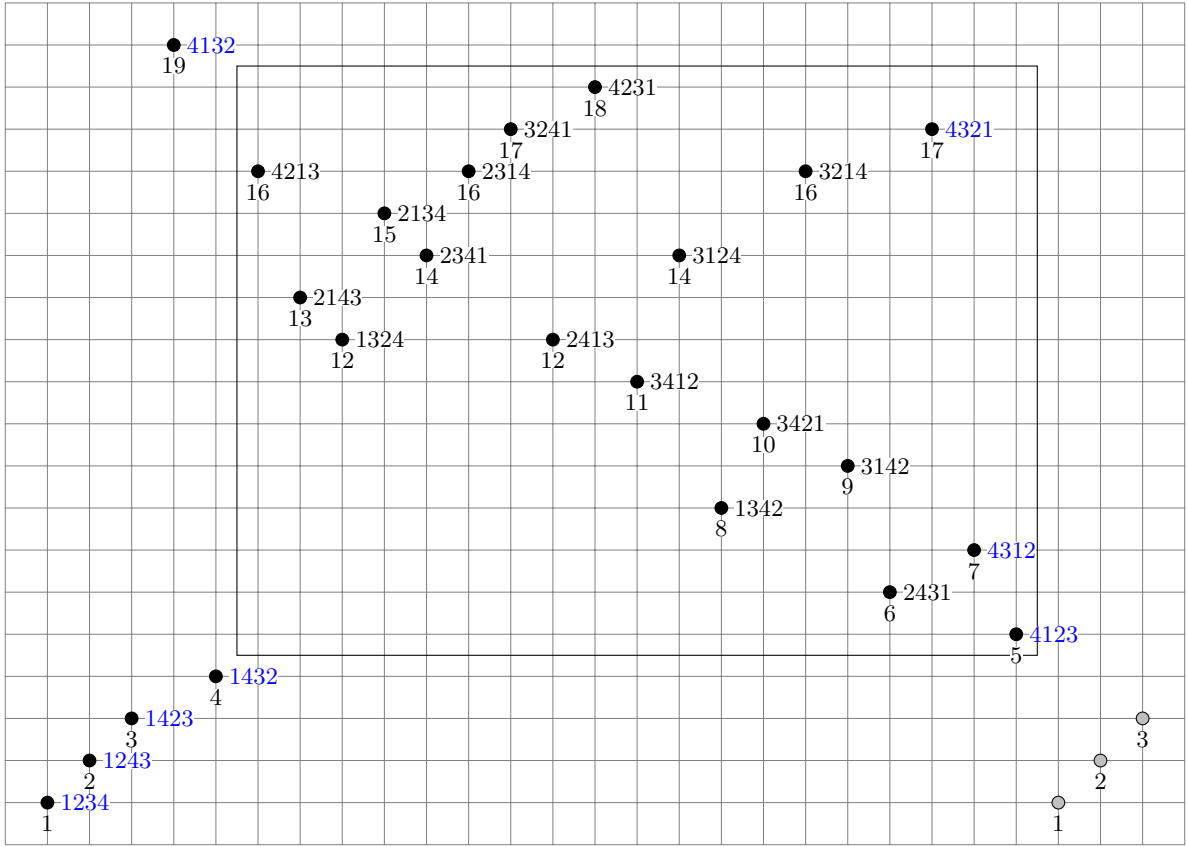
\begin{figure}
		\begin{center}
			\begin{tikzpicture}[scale=0.56]
				\draw[help lines](0,0) grid (28,20);
				\draw (5.5,4.5) rectangle (24.5,18.5);
				\foreach \x/\y/\z in {1/1/1234,2/2/1243,3/3/1423,4/19/4132,5/4/1432,22/17/4321,23/7/4312,24/5/4123}{
					\draw(\x,\y) node[pvtx,label=right:{ }]{};
					\draw(\x,\y-0.3) node[below,fill=white,inner sep=0]{\small \y};
					\draw(\x+0.3,\y) node[right,fill=white,inner sep=0]{\color{blue} \small \z};
				}
				\foreach \x/\y/\z in {6/16/4213,7/13/2143,8/12/1324,9/15/2134,10/14/2341,11/16/2314,12/17/3241,13/12/2413,14/18/4231, 15/11/3412,16/14/3124,17/8/1342,18/10/3421,19/16/3214,20/9/3142,21/6/2431}{
					\draw(\x,\y) node[pvtx]{};
					\draw(\x,\y-0.3) node[below,fill=white,inner sep=0]{\small \y};
					\draw(\x+0.3,\y) node[right,fill=white,inner sep=0]{\small \z};
				}
				\foreach \x/\y/\z in {25/1/1234,26/2/1243,27/3/1423}{
					\draw(\x,\y) node[pvtx,fill=gray!50!white,label=right:{ }]{};
					\draw(\x,\y-0.3) node[below,fill=white,inner sep=0]{\small \y};
				}
			\end{tikzpicture}
		\end{center}
		\caption{$(1, 2, 3, 19, 4, 16, 13, 12, 15, 14, 16, 17, 12, 18, 11, 14, 8, 10, 16, 9, 6, 17, 7, 5)$, a universal cycle for $S_4$.}
		\label{fig:u-cycle}
	\end{figure}
	
	\begin{figure}
		\begin{center}
			\begin{tikzpicture}[scale=0.56]
				\draw[help lines](0,0) grid (25,13);
				\draw (5.5,4.5) rectangle (21.5,11.5);
				\foreach \x/\y/\z in {1/1/1234,2/2/1243,3/3/1423,4/12/4132,5/4/1432,19/10/4321,20/8/4312}{
					\draw(\x,\y) node[pvtx,label=right:{ }]{};
					\draw(\x,\y-0.3) node[below,fill=white,inner sep=1pt]{\small \y};
					\draw(\x+0.3,\y) node[right,fill=white,inner sep=1pt]{\color{blue} \small \z};
				}
				\foreach \x/\y/\z in {6/10/3214,7/9/3214,8/8/2312,9/11/4231,10/7/2314,11/8/2134,12/6/1231,13/9/3421,14/10/4213, 15/6/2143,16/5/1324,17/9/3142,18/7/2431}{
					\draw(\x,\y) node[pvtx]{};
					\draw(\x,\y-0.3) node[below,fill=white,inner sep=1pt]{\small \y};
					\draw(\x+0.3,\y) node[right,fill=white,inner sep=1pt]{\small \z};
				}
				\foreach \x/\y/\z in {8/8/2312,12/6/1231,21/3/3123}{
					\draw(\x,\y) node[pvtx]{};
					\draw(\x,\y-0.3) node[below,fill=white,inner sep=1pt]{\small \y};
					\draw(\x+0.3,\y) node[right,fill=white,inner sep=1pt]{\color{red} \small \z};
				}
				\foreach \x/\y/\z in {22/1/1234,23/2/1243,24/3/1423}{
					\draw(\x,\y) node[pvtx,fill=gray!50!white,label=right:{ }]{};
					\draw(\x,\y-0.3) node[below,fill=white,inner sep=0]{\small \y};
				}
				\draw (0,-1) node[right]{};
			\end{tikzpicture}
		\end{center}
		\caption{$(1, 2, 3, 12, 4, 10, 9, 8, 11, 7, 8, 6, 9, 10, 6, 5, 9, 7, 10, 8, 3)$, a universal cycle for $S_4$ shortened by $3$.}
		\label{fig:u-cycle-3}
	\end{figure}
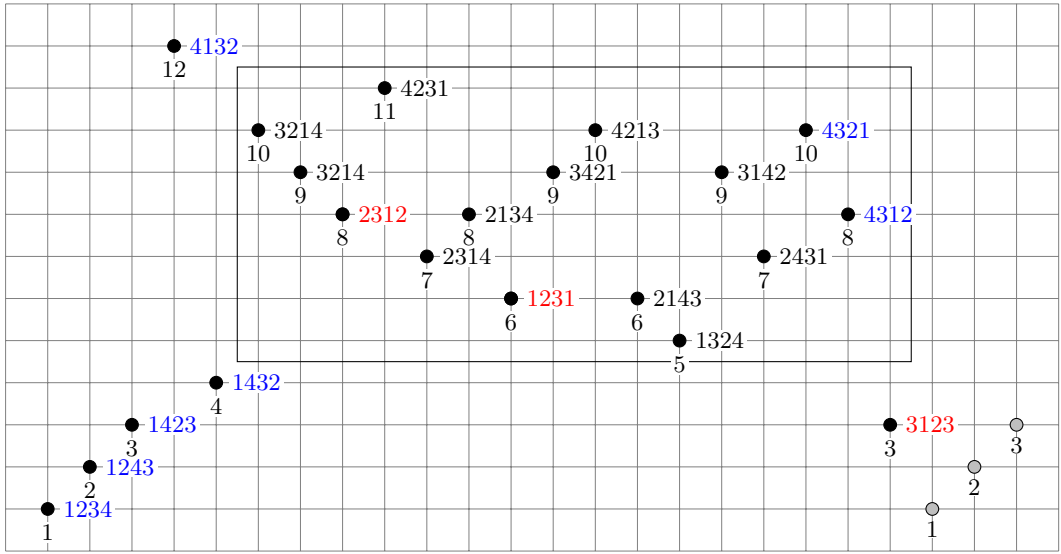

	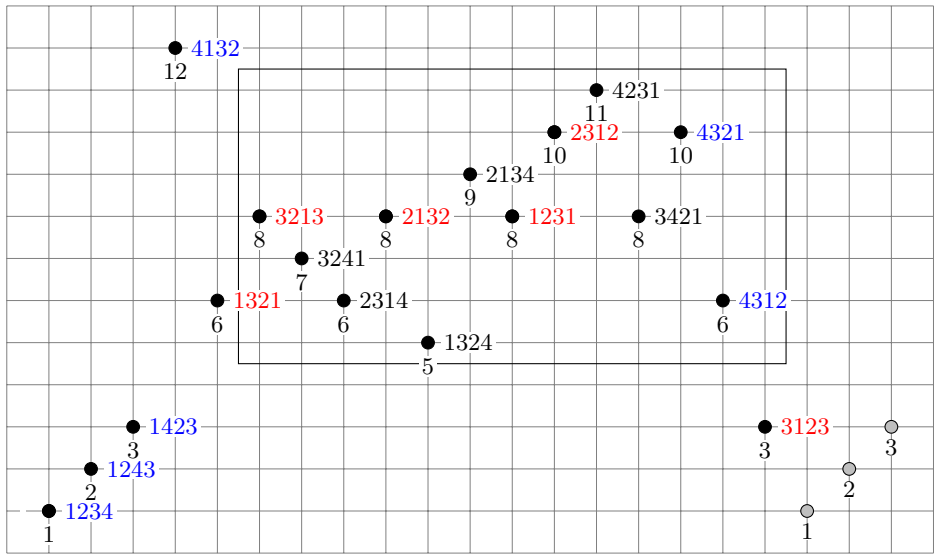
\begin{figure}
		\begin{center}
			\begin{tikzpicture}[scale=0.56]
				\draw[help lines](0,0) grid (22,13);
				\draw (5.5,4.5) rectangle (18.5,11.5);
				\foreach \x/\y/\z in {1/1/1234,2/2/1243,3/3/1423,4/12/4132,16/10/4321,17/6/4312,}{
					\draw(\x,\y) node[pvtx,label=right:{ }]{};
					\draw(\x,\y-0.3) node[below,fill=white,inner sep=1pt]{\small \y};
					\draw(\x+0.3,\y) node[right,fill=white,inner sep=1pt]{\color{blue} \small \z};
				}
				\foreach \x/\y/\z in {6/8/3213,7/7/3241,8/6/2314,9/8/2132,10/5/1324,11/9/2134,12/8/1231,13/10/2312,14/11/4231, 15/8/3421}{
					\draw(\x,\y) node[pvtx]{};
					\draw(\x,\y-0.3) node[below,fill=white,inner sep=1pt]{\small \y};
					\draw(\x+0.3,\y) node[right,fill=white,inner sep=1pt]{\small \z};
				}
				\foreach \x/\y/\z in {5/6/1321,6/8/3213,9/8/2132,12/8/1231,13/10/2312,18/3/3123}{
					\draw(\x,\y) node[pvtx]{};
					\draw(\x,\y-0.3) node[below,fill=white,inner sep=1pt]{\small \y};
					\draw(\x+0.3,\y) node[right,fill=white,inner sep=1pt]{\color{red} \small \z};
				}
				\foreach \x/\y/\z in {19/1/1234,20/2/1243,21/3/1423}{
					\draw(\x,\y) node[pvtx,fill=gray!50!white,label=right:{ }]{};
					\draw(\x,\y-0.3) node[below,fill=white,inner sep=0]{\small \y};
				}
				\draw (0,-1) node[right]{};
			\end{tikzpicture}
		\end{center}
		\caption{$(1, 2, 3, 12, 6, 8, 7, 6, 8, 5, 9, 8, 10, 11, 8, 10, 6, 3)$, a universal cycle for $S_4$ shortened by $6$.}
		\label{fig:u-cycle-6}
	\end{figure}

\end{document}